\numberwithin{equation}{section}
\DeclareMathOperator{\BMO}{BMO}
\DeclareMathOperator{\Div}{div}
\newcommand{\R}{\mathbb R}
\newcommand{\N}{\mathbb N}
\newcommand{\dd}{\mathrm d}
\newcommand{\dx}{\,\mathrm{d}x}
\newtheorem{theorem}{Theorem}[section]
\newtheorem{corollary}[theorem]{Corollary}
\newtheorem{remark}[theorem]{Remark}
\theoremstyle{definition}
\newtheorem{definition}[theorem]{Definition}
\begin{document}

\title[Stokes equations in rough domains]
{A Schauder theory for the Stokes equations in rough domains}

\author{Dominic Breit}
\address{Institute of Mathematics, TU Clausthal, Erzstra\ss e 1, 38678 Clausthal-Zellerfeld, Germany}
\email{dominic.breit@tu-clausthal.de}


\subjclass[2020]{35B65,35Q30,74F10,74K25,76D03,}

\date{\today}


\keywords{Stokes system, Functions of bounded mean oscillation, Campanto spaces, Maximal regularity theory, irregular domains,}

\begin{abstract}
We consider the steady Stokes equations in a bounded domain with forcing in divergence form supplemented with no-slip boundary conditions.
We provide a maximal regularity theory in Campanato spaces (inlcuding $\mathrm{BMO}$ and $C^{0,\alpha}$ for $0<\alpha <1$ as special cases) under minimal assumptions on the regularity of the underlying domain. Our approach is based on pointwise multipliers in Campanto spaces.
\end{abstract}

\maketitle

\section{Introduction}

For a given forcing $\bfF:\Omega\rightarrow\R^{n\times n}$
we consider the Stokes system
 \begin{align}\Delta\bfu-\nabla \pi=-\Div\bfF,\quad \Div\bfu=0,&\label{3}
 \end{align}
in a bounded domain $\Omega\subset\R^n$ supplemented with no-slip boundary conditions. We aim at a maximal regularity theory which allows estimates of the form
\begin{align}\label{eq:aim}
\|\nabla\bfu\|_{X(\Omega)}+\|\pi\|_{X(\Omega)}\lesssim \|\bfF\|_{X(\Omega)}
\end{align}
in a scale of function spaces $X(\Omega)$ ranging from $\BMO(\Omega)$ (functions with bounded mean oscillation) to $C^{0,\alpha}(\Omega)$ ($\alpha$-H\"older-continuous functions) under minimal assumptions on the regularity of $\partial\Omega$. We will introduce this scale by means of Campanato spaces in detail in Section \ref{subsec:cs}. They are based on oscillatory integrals with weights, where the weight for $\BMO$ is $\omega(r)=1$ and the weight for  $C^{0,\alpha}$ is $\omega(r)=r^\alpha$.

The regularity theory for \eqref{3} has a long history and one typically considers rather estimates of the form
\begin{align}\label{eq:aim'}
\|\nabla^2\bfu\|_{X(\Omega)}+\|\nabla\pi\|_{X(\Omega)}\lesssim \|\bff\|_{X(\Omega)},\quad \bff=\Div\bfF.
\end{align}
If $\partial\Omega$ is sufficiently smooth and $X(\Omega)$ is a non-borderline function space estimates \eqref{eq:aim} and \eqref{eq:aim'} are basically equivalent. This can be shown be means of the square root of the Stokes operator. A classical problem concerns \eqref{eq:aim'} in the class $X(\Omega)=L^q(\Omega)$ (or $W^{k,q}(\Omega)$ for $k\in\N$) for $1<q<\infty$. For an exhaustive picture and detailed references we refer to \cite[Chapter IV]{Ga}. The classical assumption here is that $\partial\Omega$ has a $C^2$-boundary (a $C^{2+k}$-boundary, respectively). Under minimal assumptions concerning the boundary regularity a corresponding theory has been developed only very recently in \cite{Br}. This is based on the theory of Sobolev multipliers \cite{MaSh} which has been employed before successfully to obtain regularity estimates for the Laplace equation in non-smooth domains. Similarly, estimate \eqref{eq:aim'} is well-known in the case $X=C^{0,\alpha}$ for $0<\alpha<1$ for smooth domains, cf.~\cite{So}. Related results under minimal assumptions on the regularity of the boundary seem missing in literature. As far as the space $\BMO$ is concerned, results on the semigroup associated to the unsteady Stokes equations can be found in \cite{BG}, \cite{BGS} and \cite{BGST}. Form that one can easily derive estimates in the spirit of \eqref{eq:aim'}. The results from \cite{BG}, \cite{BGS} and \cite{BGST} require, however, rather smooth domains.

A comprehensive theory for the $p$-Laplace system (which includes the plain Laplacian for $p=2$) that relates to \eqref{eq:aim} has been developed recently in \cite{BCDS}. The sharpness of the results (in the sense of minimal regularity of the underlying domain) is demonstrated by a family of apropos examples. As one might have expected the required regularity for an estimate in $C^{0,\alpha}$ is a boundary of class $C^{1,\alpha}$, meaning that $\partial\Omega$ can be locally described by charts of class $C^{1,\alpha}$ (we will make this concept precise in Definition \ref{def:besovboundary}). The question about the correct assumption for the $\BMO$-estimate is more subtle and it is unclear what to expect. It turned out that the correct assumption is that the derivatives of the boundary charts belong to a Campanato space with a weight which decays faster
than $\sigma(r)=-1/\log(r)$. As a matter of fact, for each weight $\omega$, which generates the Campanato space $\mathcal L^\omega$, one can compute a weight $\sigma$ via the formula
\begin{align}\label{eq:sigmaA}
\sigma(r):=\omega(r)\bigg(\int_{r}^1\frac{\omega(\rho)}{\rho}\,\dd\rho\bigg)^{-1}
\end{align}
 generating the optimal Campanato space $\mathcal L^\sigma$ for the boundary charts. For $\BMO$ (where $\omega(r)=1$) or function spaces close to it the space $\mathcal L^\sigma$ is significantly smaller than $\mathcal L^\omega$, while both coincide in the case $\omega(r)=r^\alpha$.

The aim of the present paper is now to prove estimate \eqref{eq:aim} under the same assumption on $\partial\Omega$ as for the ($p$-) Laplacian in \cite{BCDS}. Unfortunately, the approach from \cite{BCDS} does not extend to the Stokes system. It is based on a flattening of the boundary and a reflection of the hence obtained solution at the flat boundary. In the case of no-slip boundary conditions one must apply an odd reflection which destroys the divergence-free constraint.\footnote{The situation changes when perfect-slip boundary conditions are considered and a corresponding theory in this case has been obtained in \cite{MS} and applies even to nonlinear Stokes systems.}  Thus we apply a different strategy which is inspired by the recent results from \cite{Br}, where an approach by Sobolev multipliers lead to optimal results for estimates in (fractional) Sobolev spaces: We employ pointwise Campanato multipliers, see Section \ref{subsec:cs} for details.
They have been fully characterised in \cite{Na} and lead to exactly the same relationship between the spaces $\mathcal L^\omega$ and $\mathcal L^\sigma$ as in  \cite{BCDS}: Estimate \eqref{eq:aim} holds if the derivatives of the boundary charts are multipliers on $\mathcal L^\omega$ and thus must belong to $\mathcal L^\sigma$ with $\sigma$ given in \eqref{eq:sigmaA}. This is our main result and can be found in Theorem \ref{thm:stokessteady}.

 Before the space of Campanato multipliers comes into play, one has to introduce local coordinates and apply an estimate for the Stokes problem on the half space. We analyse the latter in Section \ref{sec:half} and reduce the question of Campanto-regularity to decay estimates (such a strategy has also been applied in \cite{DKS,DKS2} and, eventually, in \cite{BCDKS,BCDS}). While they are well-known in the interior, we were unable to trace a reference for decay estimates at boundary points and thus give a proof in Theorem \ref{thm:decay}. The situation is more delicate than in the case of classical elliptic problems studied in \cite{Ca}: The behaviour of velocity gradient and pressure function cannot be separated
and the osciallations of the former can only be controlled with the help of the oscillations of the latter. 
This entirely differs from the case of interior estimates, where the pressure function can be completely taken out of the picture (see, e.g., \cite{GM} or \cite[Lemma 3.5]{FuS}).

By a strategy similar to that just described for estimate \eqref{eq:aim} we are also able to prove estimate \eqref{eq:aim'} in Campanto spaces $\mathcal L^\omega$ for domains of minimal regularity requiring. Here we basically require one derivative more for the boundary charts, cf. Theorem \ref{thm:stokessteadyf}. It is interesting to note that on this level the gap between the function space $\mathcal L^ \omega$ in the estimate and that for the boundary charts (that is $\mathcal L^\sigma$ in the case of \eqref{eq:aim} with $\mathcal L^\sigma$ possibly smaller than $\mathcal L^\omega$) does not even exists in the $\BMO$-case. If the second derivatives of the boundary charts belong to $\mathcal L^\omega$, i.e., $\partial\Omega\in W^2\mathcal L^\omega$, then estimate \eqref{eq:aim'} holds. In particular, $\partial\Omega\in W^2\BMO$ implies \eqref{eq:aim'} for $\BMO$.
This significantly improves
the results which follow from \cite{BGST}, where a $C^3$-boundary is assumed.

\section{Preliminaries}
\subsection{Conventions}

We write $f\lesssim g$ for two non-negative quantities $f$ and $g$ if there is a $c>0$ such that $f\leq\,c g$. Here $c$ is a generic constant which does not depend on the crucial quantities. If necessary we specify particular dependencies. We write $f\approx g$ if $f\lesssim g$ and $g\lesssim f$.
We do not distinguish in the notation for the function spaces between scalar- and vector-valued functions. However, vector-valued functions will usually be denoted in bold case.

As usual we denote by $B_r(x)\in\R^n$ the ball around $x\in\R^n$ with radius $r>0$. Similarly, $B_r'(x')\in\R^{n-1}$ is the ball around $x'\in\R^{n-1}$ and 
$B_r^+(x):=B_r(x)\cap\mathbb H\in\R^n$ is the half ball around $x\in\R^n$, where $\mathbb H:=\{x=(x',x_n)\in\R^n:\,x_n>0\}$ is the half space.

\subsection{Function spaces}
Let $\mathcal O\subset\R^n$, $n\geq 1$, be open.
Function spaces of continuous or $\alpha$-H\"older-continuous functions, $\alpha\in(0,1)$,
 are denoted by $C({\mathcal O})$ or $C^{0,\alpha}({\mathcal O})$ respectively. Similarly, we write $C^1({\mathcal O})$ and $C^{1,\alpha}({\mathcal O})$ for spaces of functions which are ($\alpha$-H\"older) continuously differentiable.
We denote as usual by $L^p(\mathcal O)$ and $W^{k,p}(\mathcal O)$ for $p\in[1,\infty]$ and $k\in\mathbb N$ Lebesgue and Sobolev spaces over $\mathcal O$. For a bounded domain $\mathcal O$ the space $L^p_\perp(\mathcal O)$ denotes the subspace of  $L^p(\mathcal O)$ of functions with zero mean, that is $(f)_{\mathcal O}:=\dashint_{\mathcal O}f\dx:=\mathscr L^n(\mathcal O)^{-1}\int_{\mathcal O}f\dx=0$. 
 We denote by $W^{k,p}_0(\mathcal O)$ the closure of the smooth and compactly supported functions in $W^{k,p}(\mathcal O)$. If $\partial\mathcal O$ is regular enough, this coincides with the functions vanishing $\mathcal H^{n-1}$ -a.a. on $\partial\mathcal O$.  We also denote by $W^{-k,p}(\mathcal O)$ the dual of $W^{k,p}_0(\mathcal O)$.
  Finally, we consider subspaces
$W^{1,p}_{\Div}(\mathcal O)$ and $W^{1,p}_{0,\Div}(\mathcal O)$ of divergence-free vector fields which are defined accordingly. Last we introduce for unbounded domains $\mathcal O$ the homogeneous Sobolev spaces $\mathcal D^{k,p}(\mathcal O)$ as the set of all locally $p$-integrable functions with finite $\|\nabla^k\cdot\|_{L^p(\mathcal O)}$-semi norm. Furthermore, $\mathcal D^{k,p}_0(\mathcal O)$ is defined as the closure of $C_c^\infty(\mathcal O)$ with respect to $\|\nabla^k\cdot\|_{L^p(\mathcal O)}$
along with its solenoidal variant $\mathcal D_{0,\Div}^{k,p}(\mathcal O)$, where we take the closure of
$C_{c,\Div}^\infty(\mathcal O)$ instead. Here $C_{c}^\infty(\mathcal O)$ and $C_{c,\Div}^\infty(\mathcal O)$ denote the spaces of smooth and compactly supported (solenoidal) functions.

\subsection{Campanato spaces}\label{subsec:cs}
In this subsection we give a brief introcution to Campanto spaces \cite{CaA,CaB} and collect some of their useful properties to be used later.

Let $\mathcal{O}$ be an open set in $\mathbb R^n$, which is fat. This means that it is assumed to have the
property that there exists a constant $C$ such that
$$|B \cap \mathcal{O}| \geq C |B|$$
for every ball $B$, centered at a point on $\partial \mathcal{O}$.
 We define for a ball $B$ and
$g\in L^q(\mathcal{O} )$, $q\geq1$,
\begin{align*}
  (\mathcal M^{\sharp,q}_{\mathcal{O}} g)(x)&:=\sup_{r>0}\bigg(\dashint_{B_r(x)\cap \mathcal{O}} \abs{g-(g)_{B_r(x)\cap \mathcal{O}}}^q\dx\bigg)^\frac1q.
\end{align*}
We set further 
$\mathcal M^{\sharp}_\mathcal{O}=\mathcal M^{\sharp,1}_{\mathcal{O}}$.
The space $\BMO(\mathcal O)$ of functions of bounded mean oscillations is defined via the
following semi norm
\begin{align*}
  \norm{g}_{\BMO(\mathcal{O})}=\sup_{B_r}
  \dashint_{B_r\cap \mathcal{O}}\abs{g-(g)_{B_r\cap \mathcal{O}}}\dx 
\end{align*}
saying that $g\in \BMO(\mathcal{O} )$, whenever its semi-norm is
bounded. Therefore $g\in \BMO(\mathcal{O})$ if and only if $\mathcal M^\sharp
g\in L^\infty(\mathcal{O})$. By H\"older's inequality and the classical John-Nirenberg estimate \cite{JN} an equivalent characterisation is given by
\begin{align*}
  \norm{g}_{\BMO(\mathcal{O})}\approx  \sup_{B_r}\bigg(\dashint_{B_r\cap \mathcal{O}} \abs{g-(g)_{B_r\cap \mathcal{O}}}^q\dx\bigg)^\frac1q
\end{align*}
for any $q\in(1,\infty)$. In view of our application to \eqref{3} our preferred choice here is $q=2$.
\\
More generally, for a non-decreasing function $\omega \,:\,
(0,\infty) \to (0,\infty)$ we define
\begin{align}
\begin{aligned} 
  (\mathcal M^{\sharp,q}_{\omega,\mathcal{O}} g)(x)&:=\sup_{r>0}\frac{1}{\omega(r)}\bigg(\dashint_{B_r(x)\cap \mathcal{O}} \abs{g-(g)_{B_r(x)\cap \mathcal{O}}}^q\dx\bigg)^\frac1q.
\end{aligned}
\end{align}
The Campanato semi-norm
associated with $\omega$ is given by
\begin{align*}
  \norm{g}_{\mathcal L^\omega(\mathcal{O})}= \sup_{B_r}\frac{1}{\omega(r)}\bigg(\dashint_{B_r\cap \mathcal{O}} \abs{g-(g)_{B_r\cap \mathcal{O}}}^2\dx\bigg)^{\frac{1}{2}}.
\end{align*}
 Finally, we also consider higher order Campanato spaces $W^1\mathcal L^\omega$ ($W^2\mathcal L^\omega$) which are defined by requiring that a function and its (first and second order) weak derivative(s) belong to $\mathcal L^\omega$.

We denote by $C^{0, \omega}(\mathcal{O})$ the space of  functions $f$ in $\mathcal{O}$ endowed with the semi-norm
\begin{equation}\label{C0om}
\|f \|_{C^{0, \omega }(\mathcal{O})}=
\sup_{
\begin{tiny}
 \begin{array}{c}{
    x, y \in \mathcal{O}} \\
x \neq y
 \end{array}
  \end{tiny}
}
\frac{|f (x) - f(y)|}{\omega (|x-y|)}\,.
\end{equation}
Plainly, if $\omega (0)=0$, then $C^{0, \omega}(\mathcal{O})$ is a space of uniformly continuous functions in $\mathcal{O}$,  with modulus of continuity not exceeding $\omega$. 
 It is shown in \cite{Sp} that, if $\mathcal{O}$ is a bounded Lipschitz domain, and the parameter function $\omega$ fulfils the condition
\begin{equation}\label{dini} \int _0^{\textcolor{blue}{\cdot}} \frac{\omega (r)}r\, {\rm d} r < \infty,
\end{equation}
 then 
\begin{equation}\label{spanne1}
\mathcal L ^\omega(\mathcal{O}) \to  {\color{black} C^{0, \underline \omega}(\mathcal{O})},
\end{equation}
where $\underline \omega$ is defined by
\begin{equation}\label{dini1}
\underline \omega (r) = \int _0^r \frac{\omega (\rho)}\rho\,\mathrm{d}\rho \, \quad
\hbox{for $r \geq 0$.}
\end{equation}
Note that, as shown in  \cite{Sp}, if the function $\tfrac{\omega (r)}r$ is non-increasing, condition \eqref{dini} is necessary even for an embedding of the space $\mathcal L ^\omega(\mathcal{O})$ into
$L^\infty (\mathcal{O})$. Also, the function $\underline \omega$ is optimal in \eqref{spanne1}.

Based on these observations one easily sees in the case $\omega(r)=r^\beta$ that the spaces $\mathcal L^\omega(\mathcal{O})$ and $C^{0,\omega}$ coincide and agree with the space of $\beta$-H\"older continuous functions by $C^{0,\beta}(\mathcal{O})$. For general weights we require that there is $\beta_0\in(0,1)$ such that
\begin{equation} \label{eq:omega condition}
\omega(r) \leq c_\omega \theta^{-\beta_0} \omega(\theta r) \qquad \hbox{for
 $\theta  \in (0,1)$,}
\end{equation}
for some  constant $c_\omega$.

Finally, we consider multipliers on $\mathcal L^\omega(\mathcal{O})$. 
The space $\mathscr M^\omega(\mathcal{O})$ of Campanto multipliers is defined as the measurable functions for which the $\mathscr M^\omega(\mathcal{O})$-norm given by
\begin{align}\label{eq:SoMo'}
\|v\|_{\mathscr M^\omega(\mathcal{O})}:=\sup_{w:\,\|w\|_{\mathcal L^\omega(\mathcal{O})}+|(w)_\mathcal{O}|=1}\bigg(\|v \,w\|_{\mathcal L^\omega(\mathcal{O})}+|(vw)_\mathcal{O}|\bigg)
\end{align}
is finite. Multipliers on $\mathcal L^\omega(\mathcal{O})$ are fully characterised in \cite[Theorem 1.3]{Na}\footnote{In \cite{Na} the more general case of multipliers between Campanato spaces with possibly different weights are analysed.} in terms of the function
\begin{align}\label{eq:sigma}
\sigma(r):=\omega(r)\bigg(\int_\rho^1\frac{\omega(\rho)}{\rho}\,\dd\rho\bigg)^{-1}.
\end{align}
In particular, it is show that $\mathscr M^\omega(\mathcal{O})=\mathcal L^\sigma(\mathcal{O})$ and it holds
\begin{align}\label{eq:cm}
\|v\|_{\mathscr M^\omega(\mathcal{O})}\approx \|v\|_{\mathcal L^\sigma(\mathcal{O})}+\|v\|_{L^1(\mathcal{O})}.
\end{align}
This characterisation is crucial in the proof of Theorem \ref{thm:stokessteady}. 

If the functions in question are essentially bounded, we make use of the following trivial multiplier estimate: Suppose that $v,w\in L^\infty(\mathcal O)$. Then it holds
\begin{align*}
\dashint_{B_r\cap \mathcal O}|vw-(vw)_{B_r\cap\mathcal O}|^2\dx&\lesssim \dashint_{B_r\cap \mathcal O}|vw-v(w)_{B_r\cap\mathcal O}|^2\dx+\dashint_{B_r\cap \mathcal O}|v(w)_{B_r\cap\mathcal O}-(v)_{B_r\mathcal O}(w)_{B_r\cap \mathcal O}|^2\dx\\
&+\dashint_{B_r\cap \mathcal O}|(v)_{B_r\cap\mathcal O}(w)_{B_r\cap\mathcal O}-(vw)_{B_r\cap\mathcal O}|^2\dx\\
&\lesssim \|v\|^2_{L^\infty(\mathcal O)}\dashint_{B_r\cap \mathcal O}|w-(w)_{B_r\cap\mathcal O}|^2\dx+\|w\|_{L^\infty(\mathcal O)}^2\dashint_{B_r\cap \mathcal O}|v-(v)_{B_r\cap\mathcal O}|^2\dx
\end{align*}
and, by taking the supremum in $r$,
\begin{align} \label{eq:cm+}
\|uv\|_{\mathcal L^\omega(\mathcal O)}\lesssim \|v\|_{L^\infty(\mathcal O)}\| 
w\|_{\mathcal L^\omega(\mathcal O)}+ \|w\|_{L^\infty(\mathcal O)}\| 
v\|_{\mathcal L^\omega(\mathcal O)}
\end{align}
for all $v,w\in \mathcal L^\omega(\mathcal O)\cap L^\infty(\mathcal O)$. We will benefit from \eqref{eq:cm+} in the proof of Theorem \ref{thm:stokessteady2}.

\subsection{Parametrisation of domains}\label{sec:para}
In this section we describe how to parametrise the boundary of a domain by local charts.
We follow the presentation from \cite{Br} (see also \cite{Br2}).
 Let $\Omega\subset\R^n$ be a bounded open set.
We assume that $\partial{\Omega}$ can be covered by a finite
number of open sets $\mathcal U^1,\dots,\mathcal U^\ell$ for some $\ell\in\mathbb N$, such that
the following holds. For each $j\in\{1,\dots,\ell\}$ there is a reference point
$y^j\in\R^n$ and a local coordinate system $\{e^j_1,\dots,e_n^j\}$ (which we assume
to be orthonormal and set $\mathcal Q_j=(e_1^j|\dots |e_n^j)\in\mathbb R^{n\times n}$, i.e., the matrix with column vectors $e_1^j,\dots ,e_n^j$), a function
$\varphi_j:\mathbb R^{n-1}\rightarrow\mathbb R$
and $r_j>0$
with the following properties:
\begin{enumerate}[label={\bf (A\arabic{*})}]
\item\label{A1} There is $h_j>0$ such that
$$\mathcal U^j=\{x=\mathcal Q_jz+y^j\in\mathbb R^n:\,z=(z',z_n)\in\R^n,\,|z'|<r_j,\,
|z_n-\varphi_j(z')|<h_j\}.$$
\item\label{A2} For $x\in\mathcal U^j$ we have with $z=\mathcal Q_j^\top(x-y^j)$
\begin{itemize}
\item $x\in\partial{\Omega}$ if and only if $z_n=\varphi_j(z')$;
\item $x\in{\Omega}$ if and only if $0<z_n-\varphi_j(z')<h_j$;
\item $x\notin{\Omega}$ if and only if $0>z_n-\varphi_j(z')>-h_j$.
\end{itemize}
\item\label{A3} We have that
$$\partial{\Omega}\subset \bigcup_{j=1}^\ell\mathcal U^j.$$
\end{enumerate}
In other words, for any $x_0\in\partial{\Omega}$ there is a neighbourhood $U$ of $x_0$ and a function $\varphi:\mathbb R^{n-1}\rightarrow\mathbb R$ such that after translation and rotation\footnote{By translation via $y_j$ and rotation via $\mathcal Q_j$ we can assume that $x_0=0$ and that the outer normal at~$x_0$ is pointing in the negative $x_n$-direction.}
 \begin{align}\label{eq:3009}
 U \cap {\Omega} = U \cap G,\quad G = \set{(x',x_n)\in \R^n \,:\, x' \in \R^{n-1}, x_n > \varphi(x')}.
 \end{align}

The regularity of $\partial{\Omega}$ can now be described by means of the a parametrisation functions $\varphi_1,\dots,\varphi_\ell$.
 \begin{definition}\label{def:besovboundary}
 Let ${\Omega}\subset\R^n$ be a bounded domain and $\sigma:(0,\infty)\rightarrow(0,\infty)$ non-decreasing. We say that $\partial{\Omega}$ belongs to the class $W^1\mathcal L^\sigma$ if there is $\ell\in\mathbb N$ and functions $\varphi_1,\dots,\varphi_\ell\in W^1\mathcal L^\sigma$ satisfying \ref{A1}--\ref{A3}.
 \end{definition}
We are also interested in domains for which the
$W^{1}\mathcal L^\sigma$ semi-norm is sufficiently small: We say that a domain belongs to the class $W^1\mathcal L^\sigma(\delta)$ if
$\|\nabla\varphi_j\|_{\mathcal L^\sigma}$, $j=1,\dots,\ell$, does not exceed $\delta$. Note that
if $\partial{\Omega}\in W^1\mathcal L^{\tilde \sigma}$ for a parameter function $\tilde\sigma$  which decays faster to zero as the parameter function $\sigma$ one immediately obtains 
 $W^1\mathcal L^\sigma(\delta)$ for any given $\delta$ by introducing sufficiently many charts and arguing locally.
\begin{remark}

Recalling \eqref{spanne1} and choosing $\omega(r)=r^\beta$ for $0<\beta<1$ there our definition includes $C^{1,\beta}$-domains as a special case. Also note that various other characterisations can be obtained by a variation of Definition \ref{def:besovboundary}. For instance, we obtain Lipschitz domains ($C^1$-domains) by requiring $\varphi_1,\dots,\varphi_\ell\in W^{1,\infty}$ ($\varphi_1,\dots,\varphi_\ell\in C^{1}$) instead. Analogously, one can introduce domains with small local Lipschitz constants.
\end{remark}

Given a local chart $\varphi$ as in \eqref{eq:3009} we define $\bfPhi: \R^n\rightarrow\R^n$ as 
\begin{align}\label{PSI}
\bfPhi(x',x_n)=(x',x_n-\varphi(x')) \quad \hbox{for $(x',x_n) \in \R^n$.}
\end{align}
Now, if $\varphi:B_r'(0)\rightarrow\R$ describes a part of $\partial\Omega$ then
$\bfPhi: \overline{\Omega}\cap B_r(0)\to  B_h^+(0)$
together with $\bfPhi(\partial\Omega\cap B_r(0))\subset \set{(x',x_n):x_n=0}$ and $\bfPhi(0)=0$. 
Moreover,  the function $\bfPhi : \overline{\Omega}\cap B_r(0) \to \bfPhi(\overline{\Omega}\cap B_r(0))$ is invertible, with inverse
$\bfPsi :   \bfPhi(\overline{\Omega}\cap B_r(0)) \to \overline{\Omega}\cap B_r(0)$. We suppose that $\varphi$ is Lipschitz continuous as this is a necessary assumption in all our statements. Hence also $\bfPhi$ and $\bfPsi$ are Lipschitz continuous.
If we define $\bfJ : \overline{\Omega}\cap B_r(0)\to \setR ^{n\times n}$ as
\begin{equation}\label{J} \bfJ(x) = \nabla \bfPhi (x) \quad \hbox{for $x \in \overline{\Omega}\cap B_r(0)$,}
\end{equation}
 then $\det \bfJ (x)=1$ for $x \in \overline{\Omega}\cap B_{\textcolor{blue}{r}}(0)$ and  $\det\bfJ^{-1} (y)=1$ for $y \in  \bfPhi(\overline {\Omega}\cap B_r(0)) $. Owing to the Lipschitz continuity of $\bfPhi$ and $\bfPsi$,  there exist  constants 
$0<\lambda \leq 1 \leq \Lambda$
 such that 
 \begin{align}\label{inclusion1}
  B_{\lambda s}^+(0)\subset \bfPhi(\Omega\cap B_s(0)) \subset B_{\Lambda s}^+(0)
\end{align}
if $0<s \leq r$,
and
 \begin{align}\label{inclusion2}
 &  \Omega  \cap B_{\frac s\Lambda }(0)\subset \bfPsi(B_s^+(0)) \subset  \Omega \cap B_{\frac{s}{\lambda}}(0)
 \end{align}
if   $B_s^+(0) \subset \bfPhi(\overline \Omega\cap B_r(0))$. 

Consequently,
for $u\in \mathcal L^\omega(B_{s\Lambda}^+(0))$  we have
\begin{align}\label{lem:9.4.1}
\|u\circ\bfPsi\|_{\mathcal L^\omega( \Omega\cap B_s(0))}\lesssim \|u\|_{\mathcal L^\omega(B_{s\Lambda}^+(0))}.
\end{align}

\section{The Sokes equations in the half space}
\label{sec:half}
We consider the Sokes equation
\begin{align}\label{eq:Stokeshalf}
\Delta \bfu-\nabla\pi=-\bff=-\Div\bfF,\quad\Div\bfu=h,\quad\bfu|_{\partial{\mathbb H}}=0,
\end{align}
 in the half space
$\mathbb H:=\{x=(x',x_n)\in\R^n:\,x_n>0\}$. Here the functions $\bff:\mathbb H \rightarrow\R^n$ respectively $\bfF:\mathbb H\rightarrow\R^{n\times n}$ and $h:\mathbb H\rightarrow\R$ are given. 

\subsection{Decay estimates}
A solution $(\bfu,\pi)$ to problem \eqref{eq:Stokeshalf} in divergence-form exists in the homogeneous space $\mathcal D^{1,2}_{0,\Div}(\mathbb H)\times L^2(\mathbb H)$ provided $\bfF,h\in L^2(\mathbb H)$, see \cite[Chapter IV, Theorem IV 3.3]{Ga}. The velocity field is unique and the pressure function is unique up to a constant. The following theorem gives a decay estimate for boundary points.
\begin{theorem}\label{thm:decay}
Suppose that we have $\bfF\in L^2(\mathbb H)$ and that
 $(\bfu,\pi)\in \mathcal D^{1,2}_{0,\Div}(\mathbb H)\times L^2(\mathbb H)$ is the solution to \eqref{eq:Stokeshalf} with $h=0$. Then we have
\begin{align}\label{est:halfu}
\begin{aligned}
\dashint_{B^+_{\theta r}(x_0)}&|\nabla\bfu-(\partial_n\bfu\otimes e_n)_{B^+_{\theta r}(x_0)}|^2\dx+\dashint_{B^+_{\theta r}(x_0)}|\pi-(\pi)_{B^+_{\theta r}(x_0)}|^2\dx\\&\lesssim \theta^2 \dashint_{B^+_{r}(x_0)}|\nabla\bfu-(\partial_n\bfu\otimes e_n)_{B^+_{ r}(x_0)}|^2\dx
+c_\theta \dashint_{B^+_{r}(x_0)}|\bfF-(\bfF)_{B^+_{ r}(x_0)}|^2\dx,
\end{aligned}\end{align}
for all $x_0\in \partial\mathbb H$, all $r>0$ and all $\theta\in(0,1)$.
\end{theorem}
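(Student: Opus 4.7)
My plan is a Campanato-type comparison argument. After translating I may assume $x_0=0$. Since $-\Div\bfF = -\Div(\bfF-(\bfF)_{B^+_r})$, I replace $\bfF$ by $\bfF-(\bfF)_{B^+_r}$ and then split $(\bfu,\pi) = (\mathbf v,\rho) + (\mathbf w,\tau)$ on $B^+_r$, where $(\mathbf v,\rho)\in W^{1,2}_0(B^+_r)\times L^2_\perp(B^+_r)$ solves the Stokes problem in $B^+_r$ with data $-\Div\bfF$ and zero no-slip on all of $\partial B^+_r$. Standard $L^2$-theory on the bounded Lipschitz domain $B^+_r$ (test against $\mathbf v$ for the velocity, a Bogovskii test function for the pressure) produces
\[
\int_{B^+_r}\bigl(|\nabla\mathbf v|^2 + |\rho-(\rho)_{B^+_r}|^2\bigr)\dx \lesssim \int_{B^+_r}|\bfF-(\bfF)_{B^+_r}|^2\dx.
\]
The remainder $(\mathbf w,\tau):=(\bfu-\mathbf v,\pi-\rho)$ is then a homogeneous Stokes solution in $B^+_r$ vanishing on the flat part $\partial\mathbb H\cap B^+_r$ (with all inhomogeneity sitting on the curved portion of $\partial B^+_r$).

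The crux of the argument is the corresponding decay for $(\mathbf w,\tau)$, for $\theta\in(0,1/2]$,
\[
\dashint_{B^+_{\theta r}}\bigl|\nabla\mathbf w - (\partial_n\mathbf w\otimes e_n)_{B^+_{\theta r}}\bigr|^2 + |\tau-(\tau)_{B^+_{\theta r}}|^2\dx \lesssim \theta^2\dashint_{B^+_r}\bigl|\nabla\mathbf w - (\partial_n\mathbf w\otimes e_n)_{B^+_r}\bigr|^2\dx.
\]
I would obtain this in two steps. First, classical regularity up to the flat boundary of the homogeneous Stokes system (tangential difference quotients give smoothness of $\partial_k\mathbf w$ for $k<n$; the equation $\Delta\mathbf w=\nabla\tau$ together with $\Div\mathbf w=0$ recovers $\partial_n^2\mathbf w$; and a Bogovskii-based pressure bound $\|\tau-(\tau)_{B^+_r}\|_{L^2}\lesssim\|\nabla\mathbf w\|_{L^2}$) yields $\sup_{B^+_{r/2}}(|\nabla^2\mathbf w|^2+|\nabla\tau|^2)\lesssim r^{-2}\dashint_{B^+_r}|\nabla\mathbf w|^2$. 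Since $\partial_k\mathbf w=0$ on $\partial\mathbb H$ for $k<n$, this yields $|\partial_k\mathbf w(x)|\lesssim x_n\sup|\nabla^2\mathbf w|$ and hence $\theta^2$-control of the tangential columns of $\nabla\mathbf w$ on $B^+_{\theta r}$, while Poincar\'e against the same $L^\infty$-bound controls the oscillations of $\partial_n\mathbf w$ and of $\tau$. Second, to replace $\dashint_{B^+_r}|\nabla\mathbf w|^2$ on the right by the centred quantity, I pass to $\bar{\mathbf w}(x):=\mathbf w(x)-\mathbf A x_n$ with $\mathbf A=((\partial_n w_1)_{B^+_r},\dots,(\partial_n w_{n-1})_{B^+_r},0)$: the condition $A_n=0$ preserves $\Div$-freeness, the affine piece vanishes at $x_n=0$, and the quantity $\nabla\mathbf w-(\partial_n\mathbf w\otimes e_n)_{B^+_r}$ is itself invariant under this shift. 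Running the regularity step for $(\bar{\mathbf w},\tau)$ and combining the divergence-free identity $(\partial_n w_n)_{B^+_r}=-\sum_{k<n}(\partial_k w_k)_{B^+_r}$ with Cauchy--Schwarz bounds the single obstructing term $|(\partial_n w_n)_{B^+_r}|^2$ by $\dashint_{B^+_r}|\nabla\mathbf w-(\partial_n\mathbf w\otimes e_n)_{B^+_r}|^2$, closing the replacement.

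To finish, I split $\nabla\bfu-(\partial_n\bfu\otimes e_n)_{B^+_{\theta r}}$ and $\pi-(\pi)_{B^+_{\theta r}}$ into $(\mathbf v,\rho)$- and $(\mathbf w,\tau)$-parts. The particular contribution is controlled by enlarging the ball (losing a factor $\theta^{-n}$) and invoking the energy estimate, producing $c_\theta\dashint_{B^+_r}|\bfF-(\bfF)_{B^+_r}|^2$; the homogeneous contribution is handled by the decay just proved; and the identity $\nabla\mathbf w-(\partial_n\mathbf w\otimes e_n)_{B^+_r}=[\nabla\bfu-(\partial_n\bfu\otimes e_n)_{B^+_r}]-[\nabla\mathbf v-(\partial_n\mathbf v\otimes e_n)_{B^+_r}]$ returns the expression to $\bfu$ at the cost of one further $|\bfF-(\bfF)_{B^+_r}|^2$-term. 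The range $\theta\in[1/2,1)$ is immediate with the constant being absorbed in $c_\theta$. I expect the principal obstacle to be the homogeneous decay step: as flagged in the introduction, $\nabla\mathbf w$ and $\tau$ cannot be decoupled at the boundary, so the three devices --- the affine shift, the $\Div$-free coupling between $\partial_n w_n$ and the tangential columns, and the Bogovskii pressure estimate --- must interact compatibly so that every right-hand side collapses to the centred expression.
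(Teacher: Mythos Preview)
Your proposal is correct and follows the same comparison strategy as the paper: your homogeneous piece $(\mathbf w,\tau)$ is exactly the paper's auxiliary $(\bfh,\vartheta)$, and your particular piece $(\mathbf v,\rho)$ is the paper's difference $\bfu-\bfh$, with the same energy and negative-norm estimates for the latter. The one organisational difference lies in the homogeneous decay step: the paper iterates Caccioppoli estimates that keep the pressure oscillation $\dashint|\vartheta-(\vartheta)|^2$ on the right-hand side throughout and only removes it at the very end via the negative-norm identity $\|\pi-(\pi)_{B^+_r}\|_{L^2}\lesssim\|\nabla\bfu-(\partial_n\bfu\otimes e_n)_{B^+_r}\|_{L^2}+\|\bfF-(\bfF)_{B^+_r}\|_{L^2}$, whereas you invoke that same bound for the homogeneous pair at the outset and then use the affine shift $\mathbf w\mapsto\mathbf w-\mathbf A x_n$ together with the divergence-free coupling $(\partial_n w_n)_{B^+_r}=-\sum_{k<n}(\partial_k w_k)_{B^+_r}$ to obtain the centred right-hand side directly --- a slightly cleaner packaging of the same ingredients.
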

\begin{proof}
Without loss of generality let us assume that $x_0=0$ and set $B_r^+:=B_r^+(0)$. 
We first consider a homogenous auxiliary problem:
Let $(\bfh,\vartheta)\in W^{1,2}_{\Div}(B_r^+)\times L^2(B_r^+)$ be the unique solution\footnote{See \cite[Chapter IV.6]{Ga} for its existence.} of the problem
\begin{align}\label{eq:Stokeshalfhomo}
\Delta \bfh-\nabla\vartheta=0,\quad\Div\bfh=0,\quad\bfh|_{B_r^+\cap \partial{\mathbb H}}=0,\quad\bfh|_{\partial B_r^+\cap {\mathbb H}}=\bfu,
\end{align}
in $B_r^+$. Since the pressure $\vartheta$ is only unique up to a constant we may assume that $(\vartheta)_{B^+_{ r}}=(\pi)_{B^+_{ r}}$
 Testing the equation for $\bfv:=\bfu-\bfh$ by
$\bfv$ and noticing that $\Div\bfF=\Div(\bfF-(\bfF)_{B^+_{ r}})$ yields
\begin{align}\label{eq:comparison}
\dashint_{B^+_{r}}|\nabla\bfu-\nabla\bfh|^2\dx\lesssim \dashint_{B^+_{r}}|\bfF-(\bfF)_{B^+_{ r}}|^2\dx.
\end{align}
With the aid of the negative norm theorem from \cite{Ne} we also get
\begin{align}\label{eq:comparison2}
\begin{aligned}
\dashint_{B^+_{r}}|\pi-\vartheta|^2\dx&\lesssim r^{-n}\|\pi-\vartheta-(\pi-\vartheta)_{B_r^+}\|_{L^2(B_r^+)}^2\lesssim r^{-n}\|\nabla(\pi-\vartheta)\|_{W^{-1,2}(B_r^+)}^2\\&= r^{-n}\|\Div(\nabla\bfu-\nabla\bfh+\bfF-(\bfF)_{B_r^+})\|_{W^{-1,2}(B_r^+)}^2\\
&\lesssim \dashint_{B^+_{r}}|\nabla\bfu-\nabla\bfh|^2\dx+ \dashint_{B^+_{r}}|\bfF-(\bfF)_{B^+_{ r}}|^2\dx\\
&\lesssim \dashint_{B^+_{r}}|\bfF-(\bfF)_{B^+_{ r}}|^2\dx.
\end{aligned}
\end{align}
Now we consider for $\gamma\in\{1,\dots, n-1\}$, $0<s<r$, and $\eta\in C_c^\infty (B_r)$ with $\eta=1$ in $B_{s}$ and $|\nabla\eta|\lesssim (r-s)^{-1}$ the test-function
\begin{align*}
\bfvarphi=\partial_\gamma\big(\eta^2\partial_\gamma\bfh-\mathrm{Bog}_{B_r^+}(\nabla\eta^2\cdot\partial_\gamma\bfh)\big).
\end{align*}
Here $\mathrm{Bog}_{B_r^+}$ denotes the Bogoskii operator on ${B_r^+}$, see \cite[Section III.3]{Ga} for its properties.
This yields
\begin{align*}
\int_{B_s^+}|\partial_\gamma\nabla\bfh|^2\dx&\leq\int_{B_r^+}\eta^2|\partial_\gamma\nabla\bfh|^2\dx\\&=-\int_{B_r^+}\partial_\gamma\nabla\bfh:\nabla\eta^2\otimes\partial_\gamma\bfh\dx+\int_{B_r^+}\partial_\gamma\nabla\bfh:\nabla\mathrm{Bog}_{B_r^+}(\nabla\eta^2\cdot\partial_\gamma\bfh)\dx\\
&\leq\tfrac{1}{2}\int_{B_r^+}|\partial_\gamma\nabla\bfh|^2\dx+\tfrac{c}{(r-s)^2}\int_{B_r^+}|\partial_\gamma\bfh|^2\dx,
\end{align*}
where we used well-known continuity properties of the Bogovskii operator in the last step. By the iteration lemma from \cite[Lemma 3.1, p. 161]{Gi} we can remove the first term on the right-hand side obtaining
\begin{align}\label{eq:0303c}
\int_{B_{s}^+}|\tilde\nabla\nabla\bfh|^2\dx\lesssim \int_{B_{r}^+}\frac{|\tilde\nabla\bfh|^2}{(r-s)^2}\dx.
\end{align}
Since $\Div \bfh=0$ the only missing part of $\nabla^2\bfh$ is $\partial_n^2\tilde\bfh$, where we denote $\tilde\bfh=(h^1,\dots, h^{n-1})$, which satisfies the equation
\begin{align}\label{eq:1703}
\partial_n^2\tilde\bfh=-\tilde \Delta\tilde\bfh+\tilde{\nabla}\vartheta,\quad \tilde\nabla:=(\partial_1,\dots,\partial_{n-1}),\quad\tilde\Delta =\sum_{j=1}^{n-1}\partial_j^2.
\end{align}
Hence we are due to estimate $\tilde{\nabla}\vartheta$. Here we implement a trick from \cite{BKR}, differentiate the equation for $\bfh$ with respect to $\gamma\in\{1,\dots,n-1\}$ and multiply it by
\begin{align*}
\mathrm{Bog}_{B_\varrho^+}\big(\eta^2\partial_\gamma\vartheta-(\eta^2\partial_\gamma\vartheta)_{B_\varrho^+}\big)\in W^{1,2}_{0}(B_\varrho^+),
\end{align*} 
where we now assume that $\eta\in C^\infty_c(B_\varrho)$ with $\varrho<\frac{s+r}{2}$.
We obtain
\begin{align*}
\int_{B_\varrho^+}\eta^2|\partial_\gamma\vartheta|^2\dx&=\int_{B_\varrho^+}\partial_\gamma\nabla\bfh:\nabla \mathrm{Bog}_{B_\varrho^+}\big(\eta^2\partial_\gamma\vartheta-(\eta^2\partial_\gamma\vartheta)_{B_\varrho^+}\big)\dx\\
&+\int_{B_\varrho^+}\partial_\gamma\vartheta\dx\dashint_{B_\varrho^+}\eta^2\partial_\gamma\vartheta\dx.
\end{align*}
The first term on the right-hand side is clearly bounded by
\begin{align*}
\kappa&
\int_{B_\varrho^+}
\big|\nabla \mathrm{Bog}_{B_\varrho^+}\big(\eta^2\partial_\gamma\vartheta-(\eta^2\partial_\gamma\vartheta)_{B_\varrho^+}\big)\big|^2\dx+c(\kappa)\int_{B_\varrho^+}|\partial_\gamma\nabla\bfh|^2\dx\\
&\qquad\qquad\lesssim \kappa
\int_{B_\varrho^+}
\big|\big(\eta^2\partial_\gamma\vartheta-(\eta^2\partial_\gamma\vartheta)_{B_\varrho^+}\big)\big|^2\dx+c(\kappa)\int_{B_\varrho^+}|\partial_\gamma\nabla\bfh|^2\dx\\
&\qquad\qquad\lesssim \kappa
\int_{B_\varrho^+}
\eta^2|\partial_\gamma\vartheta|^2\dx+c(\kappa)\int_{B_\varrho^+}|\partial_\gamma\nabla\bfh|^2\dx
\end{align*}
using Young's inequality for $\kappa>0$ and the continuity properties of the Bogovskii operator. The first term can be absorbed for $\kappa$ sufficiently small.
Since $\eta=0$ on $\partial B_\varrho^+\cap \mathbb H$ and $\gamma\neq n$ we can integrate by parts in the second term obtaining
\begin{align*}
 \int_{B_\varrho^+}\partial_\gamma\vartheta\dx\dashint_{B_\varrho^+}\eta^2\partial_\gamma\vartheta\dx&=- \int_{B_\varrho^+}\partial_\gamma\vartheta\dx\dashint_{B_\varrho^+}\partial_\gamma\eta^2(\vartheta-(\vartheta)_{B_\varrho^+})\dx\\
&\leq \tfrac{1}{2}\int_{B_\varrho^+}
|\partial_\gamma\vartheta|^2\dx+c\int_{B_\varrho^+}\frac{|\vartheta-(\vartheta)_{B_\varrho^+}|^2}{(\varrho-s)^2}\dx.
\end{align*}
The first term on the right-hand side can be eliminated again with the help of the iteration lemma from \cite[Lemma 3.1, p. 161]{Gi}.
Combining everything, choosing $\kappa$ small enough and using \eqref{eq:0303c}
proves
\begin{align}\label{eq:0303c'}
\int_{B_{s}^+}|\tilde\nabla\vartheta|^2\dx\lesssim \int_{B_{r}^+}\frac{|\tilde\nabla\bfh|^2}{(r-s)^2}\dx+\int_{B_{r}^+}\frac{|\vartheta-(\vartheta)_{B_r^+}|^2}{(r-s)^2}\dx.
\end{align}
Recalling \eqref{eq:1703} this allows to control $\nabla^2\bfh$ in the sense that even
\begin{align*}
\int_{B_{s}^+}|\tilde \nabla\vartheta|^2\dx+\int_{B_{s}^+}|\nabla^2\bfh|^2\lesssim \int_{B_{r}^+}\frac{|\tilde\nabla\bfh|^2}{(r-s)^2}\dx+\int_{B_{r}^+}\frac{|\vartheta-(\vartheta)_{B_r^+}|^2}{(r-s)^2}\dx.
\end{align*}
Finally, we have $\partial_n\vartheta=\Delta h^n$ and thus
\begin{align}\label{eq:cacc2}
\int_{B_{s}^+}|\nabla\vartheta|^2\dx+\int_{B_{s}^+}|\nabla^2\bfh|^2\lesssim \int_{B_{r}^+}\frac{|\tilde\nabla\bfh|^2}{(r-s)^2}\dx+\int_{B_{r}^+}\frac{|\vartheta-(\vartheta)_{B_r^+}|^2}{(r-s)^2}\dx.
\end{align}

Now are now going to prove higher order variants of
\eqref{eq:cacc2}. First of all, \eqref{eq:0303c}
extends to higher order tangential derivatives, that is we have
\begin{align}\label{eq:0303d}
\int_{B_{s}^+}|\tilde\nabla^k\nabla\bfh|^2\dx\lesssim \int_{B_{r}^+}\frac{|\tilde\nabla^k\bfh|^2}{(r-s)^2}\dx
\end{align}
for $k\in\N$. Similarly, we obtain
\begin{align}\label{eq:0303e}
\int_{B_{s}^+}|\tilde\nabla^k\vartheta|^2\dx\lesssim \int_{B_{r}^+}\frac{|\tilde\nabla^k\bfh|^2}{(r-s)^2}\dx+\int_{B_{r}^+}\frac{|\tilde\nabla^{k-1}\vartheta-(\tilde\nabla^{k-1}\vartheta)_{B_{r}^+}|^2}{(r-s)^2}\dx.
\end{align}

To obtain an estimate for $\nabla^k\bfh$ is rather tedious and we only comment
on the case $k=2$. The general case follows in the same manner. If $k=2$ it is sufficient to control $\partial_n^2\tilde\nabla\tilde\bfh$, for which
 we use again the equation and write
\begin{align*}
\partial_n^2\tilde\nabla\tilde\bfh=-\tilde \Delta\tilde\nabla\tilde\bfh+\tilde\nabla^2\vartheta.
\end{align*}
Combining this with \eqref{eq:0303d} and \eqref{eq:0303e} we obtain
\begin{align*}
\int_{B_{s}^+}|\tilde\nabla^2\vartheta|^2\dx+\int_{B_{s}^+}|\nabla^3\bfh|^2\lesssim \int_{B_{r}^+}\frac{|\tilde\nabla^2\bfh|^2}{(r-s)^2}\dx+\int_{B_{r}^+}\frac{|\tilde\nabla\vartheta-(\tilde\nabla\vartheta)_{B_{r}^+}|^2}{(r-s)^2}\dx.
\end{align*}
and, since $\nabla^2\vartheta=\Delta \nabla \bfh$,
\begin{align*}
\int_{B_{s}^+}|\nabla^2\vartheta|^2\dx+\int_{B_{s}^+}|\nabla^3\bfh|^2\lesssim \int_{B_{r}^+}\frac{|\tilde\nabla^2\bfh|^2}{(r-s)^2}\dx+\int_{B_{r}^+}\frac{|\tilde\nabla\vartheta-(\tilde\nabla\vartheta)_{B_r^+})|^2}{(r-s)^2}\dx.
\end{align*}
Following the this idea we can prove similarly
\begin{align}\label{eq:cacc3}
\int_{B_{s}^+}|\nabla^k\vartheta|^2\dx+\int_{B_{s}^+}|\nabla^{k+1}\bfh|^2\lesssim \int_{B_{r}^+}\frac{|\tilde\nabla^k\bfh|^2}{(r-s)^2}\dx+\int_{B_{r}^+}\frac{|\tilde\nabla^{k-1}(\vartheta-(\vartheta)_{B_r^+})|^2}{(r-s)^2}\dx.
\end{align}
for $k\in\N$.
Now we have all at hand to prove the homogeneous decay estimate. It suffices to consider the case $r=1$ since the general case follows by scaling. By Poincar\'e's inequality and Sobolev's embedding for $\ell>\frac{n}{2}$ we obtain for $\theta\leq 1/2$
\begin{align*}
\dashint_{B^+_{\theta}}|\vartheta-(\vartheta)_{B^+_\theta}|^2\dx&+\dashint_{B^+_{\theta}}|\nabla\bfh-(\partial_n\bfh\otimes e_n)_{B_r^+}|^2\dx\lesssim\theta^2 \dashint_{B^+_{\theta}}|\nabla^2\bfh|^2\dx+\theta^2 \dashint_{B^+_{\theta}}|\nabla\vartheta|^2\dx\\&\leq \theta^2\sup_{B^+_{\theta}}|\nabla^2\bfh|^2+\theta^2\sup_{B^+_{\theta}}|\nabla\vartheta|^2\lesssim \theta^{2-n}\big(\|\nabla^2\bfh\|^2_{W^{\ell,2}(B_\theta^+)}+\|\nabla\vartheta\|^2_{W^{\ell,2}(B_\theta^+)}\big)\\
&
\lesssim \theta^2\dashint_{B^+_{2\theta}}|\nabla\bfh-(\partial_n\bfh\otimes e_n)_{B_{2\theta}^+}|^2\dx+\theta^2\dashint_{B^+_{2\theta}}|\vartheta-(\vartheta)_{B_{2\theta}^+}|^2\dx.
\end{align*}
Note that we applied \eqref{eq:cacc3} $\ell$-times.
By scaling we obtain
\begin{align*}
\dashint_{B^+_{\theta r}}|\vartheta-(\vartheta)_{B^+_{\theta r}}|^2\dx&+\dashint_{B^+_{\theta r}}|\nabla\bfh-(\partial_n\bfh\otimes e_n)_{B_{\theta r}^+}|^2\dx\\&\lesssim \theta^2\dashint_{B^+_{r}}|\nabla\bfh-(\partial_n\bfh\otimes e_n)_{B_r^+}|^2\dx+\theta^2\dashint_{B^+_{r}}|\vartheta-(\vartheta)_{B^+_r}|^2\dx
\end{align*}
for $\theta\leq 1/2$, the case $\theta\in(\frac{1}{2},1)$ being trivial.
Combining this with \eqref{eq:comparison}
shows
\begin{align*}
\dashint_{B^+_{\theta r}}&|\nabla\bfu-(\partial_n\bfu\otimes e_n)_{B^+_{\theta r}}|^2\dx+\dashint_{B^+_{\theta r}}|\pi-(\pi)_{B^+_{\theta r}}|^2\dx\\&
\lesssim\dashint_{B^+_{\theta r}}|\nabla\bfh-(\partial_n\bfh\otimes e_n)_{B^+_{\theta r}}|^2\dx+\dashint_{B^+_{\theta r}}|\vartheta-(\vartheta)_{B_{\theta r}}|^2\dx+\dashint_{B^+_{\theta r}}|\nabla\bfu-\bfh|^2\dx+\dashint_{B^+_{\theta r}}|\pi-\vartheta|^2\dx\\
&
\lesssim\theta^2\dashint_{B^+_{ r}}|\nabla\bfh-(\partial_n\bfh\otimes e_n)_{B^+_{r}}|^2\dx+\theta^2\dashint_{B^+_{r}}|\vartheta-(\vartheta)_{B_{r}}|^2\dx+\theta^{-n}\dashint_{B^+_{r}}|\nabla\bfu-\bfh|^2\dx+\theta^{-n}\dashint_{B^+_{r}}|\pi-\vartheta|^2\dx\\
&\lesssim \theta^2 \dashint_{B^+_{r}}|\nabla\bfu-(\partial_n\bfu\otimes e_n)_{B^+_{ r}}|^2\dx+\theta^2\dashint_{B^+_{r}}|\pi-(\pi)_{B_{r}}|^2\dx+c_\theta \dashint_{B^+_{r}}|\bfF-(\bfF)_{B^+_{ r}}|^2\dx.
\end{align*}
Finally, we aim at removing the pressure oscillations from the right-hand side. Arguing similarly to \eqref{eq:comparison2}
we have
\begin{align}\label{eq:comparison2B}
\begin{aligned}
\dashint_{B^+_{r}}|\pi-(\pi)_{B_r^+}|^2\dx&\lesssim r^{-n}\|\nabla\pi\|^2_{W^{-1,2}(B_r^+)}= r^{-n}\|\Div(\nabla\bfu+\bfF)\|_{W^{-1,2}(B_r^+)}^2\\
&\lesssim \dashint_{B^+_{r}}|\nabla\bfu-(\nabla\bfu)_{B_r^+}|^2\dx+ \dashint_{B^+_{r}}|\bfF-(\bfF)_{B^+_{ r}}|^2\dx\\
&\leq\dashint_{B^+_{r}}|\nabla\bfu-(\partial_n\bfu\otimes e_n)_{B^+_{ r}}|^2\dx+ \dashint_{B^+_{r}}|\bfF-(\bfF)_{B^+_{ r}}|^2\dx
\end{aligned}
\end{align}
using minimality of the mean-value in the last step. 
\end{proof}
Similarly to Theorem \ref{thm:decay} we obtain the following result for the problem in non-divergence form. Here for $\bff\in L^2(\mathbb H)$ and $h\in\mathcal D^{1,2}(\mathbb H)$ there is a solution $(\bfu,\pi)\in \mathcal D^{2,2}(\mathbb H)\times \mathcal D^ {1,2}(\mathbb H)$, cf. \cite[Chapter IV, Theorem IV 3.2]{Ga}. It is unique up to a linear function for the velocity field and a possible additive constant for the pressure.

\begin{theorem}\label{thm:decayf}
Suppose that we have $\bff\in L^2(\mathbb H)$ and that
 $(\bfu,\pi)\in \mathcal D^{2,2}(\mathbb H)\times \mathcal D^ {1,2}(\mathbb H)$ is a solution to \eqref{eq:Stokeshalf} with $h=0$. Then we have
\begin{align}\label{est:halfu}
\begin{aligned}
\dashint_{B^+_{\theta r}(x_0)}&|\nabla^2\bfu-(\partial_n\nabla\bfu\otimes e_n)_{B^+_{\theta r}(x_0)}|^2\dx+\dashint_{B^+_{\theta r}(x_0)}|\nabla\pi-(\nabla\pi)_{B^+_{\theta r}(x_0)}|^2\dx\\&\lesssim \theta^2 \dashint_{B^+_{r}(x_0)}|\nabla^2\bfu-(\partial_n\nabla\bfu\otimes e_n)_{B^+_{ r}(x_0)}|^2\dx
+c_\theta \dashint_{B^+_{r}(x_0)}|\bff-(\bff)_{B^+_{ r}(x_0)}|^2\dx,
\end{aligned}\end{align}
for all $x_0\in \partial\mathbb H$, all $r>0$ and all $\theta\in(0,1)$.
\end{theorem}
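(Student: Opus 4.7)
The argument mirrors the proof of Theorem~\ref{thm:decay}, lifted by one derivative. Fix $x_0 = 0$, write $B_r^+ := B_r^+(0)$, and introduce the homogeneous auxiliary pair $(\bfh, \vartheta) \in W^{1,2}(B_r^+) \times L^2(B_r^+)$ solving $\Delta\bfh - \nabla\vartheta = 0$, $\Div\bfh = 0$ with $\bfh|_{\partial B_r^+\cap\mathbb H} = \bfu$, $\bfh|_{B_r^+\cap\partial\mathbb H} = 0$ and $(\vartheta)_{B_r^+} = (\pi)_{B_r^+}$. Set $\bfv := \bfu - \bfh$. Absorbing $(\bff)_{B_r^+}$ into the pressure via the substitution $\pi-\vartheta \leftrightarrow \pi-\vartheta-(\bff)_{B_r^+}\cdot x$, the velocity $\bfv$ solves the Stokes system in $B_r^+$ with mean-zero source $\bff - (\bff)_{B_r^+}$ and vanishes on all of $\partial B_r^+$.

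The first step is a second-order comparison estimate. Testing the equation for $\bfv$ by $\bfv$ itself (admissible since $\Div\bfv = 0$ and $\bfv$ vanishes on the whole boundary) and invoking Poincar\'e's inequality produces the first-order bound $\dashint_{B_r^+}|\nabla\bfv|^2\dx \lesssim r^2\dashint_{B_r^+}|\bff-(\bff)_{B_r^+}|^2\dx$, the analogue of \eqref{eq:comparison}. This has to be upgraded to second order on the concentric smaller half-ball $B_{r/2}^+$: since $B_{r/2}^+$ sits at positive distance from the corner $\partial B_r\cap\partial\mathbb H$, standard up-to-the-flat-boundary $W^{2,2}$-regularity for the Stokes system (cf.~\cite[Ch.~IV]{Ga}) applies there and yields
\begin{align*}
\dashint_{B_{r/2}^+}|\nabla^2\bfv|^2\dx + \dashint_{B_{r/2}^+}|\nabla(\pi-\vartheta)|^2\dx \lesssim \dashint_{B_r^+}|\bff-(\bff)_{B_r^+}|^2\dx.
\end{align*}
I expect this step to be the main technical point, because $W^{2,2}$-regularity on the whole $B_r^+$ with full Dirichlet data fails at the corner where the flat and curved pieces meet.

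The Caccioppoli estimates \eqref{eq:0303d}--\eqref{eq:cacc3} already proved in Theorem~\ref{thm:decay} apply verbatim to $(\bfh,\vartheta)$. Iterating them $\ell$ times and combining with Sobolev's embedding (for $\ell > n/2$) gives sup-bounds on $\nabla^3\bfh$ and $\nabla^2\vartheta$ controlled by oscillations of $\nabla^2\bfh$ and $\nabla\vartheta$ on a slightly enlarged half-ball. A Poincar\'e/Taylor argument as at the end of the proof of Theorem~\ref{thm:decay} -- exploiting that tangential derivatives of $\bfh$ vanish on the flat part of $\partial B_r^+$ so that the asymmetric subtraction $(\partial_n\nabla\bfh\otimes e_n)_{B_{\theta r}^+}$ captures the relevant oscillation -- then yields, for $\theta \leq 1/2$,
\begin{align*}
\dashint_{B_{\theta r}^+}|\nabla^2\bfh-(\partial_n\nabla\bfh\otimes e_n)_{B_{\theta r}^+}|^2\dx + \dashint_{B_{\theta r}^+}|\nabla\vartheta-(\nabla\vartheta)_{B_{\theta r}^+}|^2\dx \lesssim \theta^2 \big(\text{same quantity over } B_r^+\big),
\end{align*}
with the range $\theta\in(1/2,1)$ being trivial by enlargement.

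Writing $\bfu = \bfh + \bfv$ and $\pi = \vartheta + (\pi-\vartheta)$, applying the triangle inequality to the subtracted means, and combining the comparison estimate with the decay estimate yields the desired bound, except for an additional $\theta^2 \dashint_{B_r^+}|\nabla\pi-(\nabla\pi)_{B_r^+}|^2$ appearing on the right-hand side. This pressure term is absorbed by a negative-norm/mean-value argument analogous to \eqref{eq:comparison2B}: starting from $\nabla\pi = \Delta\bfu + \bff$, every component of the oscillation of $\nabla\pi$ is dominated by that of $\Delta\bfu$ -- itself controlled by the oscillation of $\nabla^2\bfu-(\partial_n\nabla\bfu\otimes e_n)_{B_r^+}$ -- plus that of $\bff$, so that
\begin{align*}
\dashint_{B_r^+}|\nabla\pi-(\nabla\pi)_{B_r^+}|^2\dx \lesssim \dashint_{B_r^+}|\nabla^2\bfu-(\partial_n\nabla\bfu\otimes e_n)_{B_r^+}|^2\dx + \dashint_{B_r^+}|\bff-(\bff)_{B_r^+}|^2\dx,
\end{align*}
which closes the argument.
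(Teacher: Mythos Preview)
Your proof is correct and follows essentially the same route as the paper: decay for the homogeneous auxiliary pair via the Caccioppoli hierarchy of Theorem~\ref{thm:decay}, a second-order comparison estimate for the difference, and absorption of the pressure oscillation via the equation $\nabla\pi=\Delta\bfu+\bff$. The only variation is in the comparison step: where you localise to $B_{r/2}^+$ to sidestep the corner, the paper obtains the $W^{2,2}$ bound~\eqref{eq:2603b} directly on all of $B_r^+$ by invoking classical Stokes regularity in convex domains---and your absorption of $(\bff)_{B_r^+}$ into the pressure is exactly the paper's modified auxiliary problem~\eqref{eq:Stokeshalfhomo'}, just rephrased.
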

\begin{proof}
As in the proof of Theorem \ref{thm:decay} the proof is based on a decay and a comparison estimate. The starting point  
is now \eqref{eq:0303d} for $k=2$ rather than $k=1$.
We obtain
\begin{align}\label{eq:2603}
\begin{aligned}
\dashint_{B^+_{\theta r}}|\nabla\vartheta-(\nabla\vartheta)_{B^+_{\theta r}}|^2\dx&+\dashint_{B^+_{\theta r}}|\nabla^2\bfh-(\partial_n\nabla\bfh\otimes e_3)_{B_{\theta r}^+}|^2\dx\\&\lesssim \theta^2\dashint_{B^+_{\theta}}|\nabla^2\bfh-(\partial_n\nabla\bfh\otimes e_n)_{B_r^+}|^2\dx+\theta^2\dashint_{B^+_{r}}|\nabla\vartheta-(\nabla\vartheta)_{B^+_r}|^2\dx.
\end{aligned}
\end{align}
 Note that exactly the same proof applies if we consider instead of \eqref{eq:Stokeshalfhomo} the problem
\begin{align}\label{eq:Stokeshalfhomo'}
\Delta \bfh-\nabla\vartheta=(\bff)_{B_r^+},\quad\Div\bfu=0,\quad\bfh|_{B_r^+\cap \partial{\mathbb H}}=0,\quad\bfh|_{\partial B_r^+\cap {\mathbb H}}=\bfu,
\end{align}
in $B_r^+$. In this case the difference $\bfv=\bfu-\bfh$, $\mathfrak q=\pi-\vartheta$ solves
\begin{align}\label{eq:Stokeshalfhomo''}
\Delta \bfv-\nabla\mathfrak q=\bff-(\bff)_{B_r^+},\quad\Div\bfv=0,\quad\bfh|_{B_r^+\cap \partial{\mathbb H}}=0,\quad\bfh|_{\partial B_r^+\cap {\mathbb H}}=0,
\end{align}
in $B_r^+$ and thus satsifies the estimate
\begin{align}\label{eq:2603b}
\dashint_{B_r^+}|\nabla^2\bfv|^2\dx+\dashint_{B_r^+}|\nabla\mathfrak q|^2\dx\lesssim 
\dashint_{B_r^+}|\bff-(\bff)_{B_r^+}|^2\dx
\end{align}
by the classical theory (although $B_r^+$ does not have a smooth boundary, it is convex, which is sufficient).
By \eqref{eq:2603} and \eqref{eq:2603b} the proof can be completed analogously to that of Theorem \ref{thm:decay}.
\end{proof}

\subsection{Campanto estimates}
The desired Campanato-estimates for \eqref{eq:Stokeshalf} follow now directly as corollaries of Theorems \ref{thm:decay} and \ref{thm:decayf}.
\begin{corollary}\label{cor:camH}
Let $\omega$ be a parameter function satisfying \eqref{eq:omega condition}.
Suppose that we have $\bfF,h\in L^2(\mathbb H) \cap\mathcal L^\omega(\mathbb H)$
and that
 $(\bfu,\pi)\in \mathcal D^{1,2}_{0,\Div}(\mathbb H)\times L^2(\mathbb H)$ is the solution to \eqref{eq:Stokeshalf}. Then we have 
\begin{align}
\|\nabla\bfu\|_{\mathcal L^{\omega}(\mathbb H)}+\|\pi\|_{\mathcal L^{\omega}(\mathbb H)}\lesssim \|\bfF\|_{\mathcal L^{\omega}(\mathbb H)}+\|h\|_{L^\omega(\mathbb H)}.
\end{align}
\end{corollary}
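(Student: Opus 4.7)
My plan is to reduce to the case $h = 0$ via a Bogovskii-type corrector and then iterate the boundary decay estimate of Theorem~\ref{thm:decay} (together with the analogous classical estimate in the interior) using the doubling-type condition \eqref{eq:omega condition}.

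First, I would reduce to $h = 0$ by subtracting an auxiliary field $\bfw$ on $\mathbb H$ satisfying $\Div\bfw = h$, $\bfw|_{\partial\mathbb H} = 0$, and $\|\nabla\bfw\|_{\mathcal L^{\omega}(\mathbb H)} \lesssim \|h\|_{\mathcal L^{\omega}(\mathbb H)}$. Such a corrector can be constructed by gluing together localised Bogovskii operators on a Whitney-type cover of $\mathbb H$ (cf.~\cite[Section III.3]{Ga}). Then $(\bfu - \bfw, \pi)$ solves \eqref{eq:Stokeshalf} with $\tilde h = 0$ and new right-hand side $\tilde\bfF = \bfF + \nabla\bfw$, whose $\mathcal L^{\omega}$-norm is controlled by the claimed right-hand side.

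For $x_0 \in \partial\mathbb H$ and $r > 0$ I would then consider the excess
\begin{align*}
\Psi(x_0, r) := \dashint_{B^+_r(x_0)} |\nabla\bfu - (\partial_n\bfu \otimes e_n)_{B^+_r(x_0)}|^2\dx + \dashint_{B^+_r(x_0)} |\pi - (\pi)_{B^+_r(x_0)}|^2\dx,
\end{align*}
so that Theorem~\ref{thm:decay} reads $\Psi(x_0, \theta r) \leq C_\star\theta^2 \Psi(x_0, r) + c_\theta \omega(r)^2 \|\bfF\|_{\mathcal L^{\omega}(\mathbb H)}^2$ for every $\theta \in (0,1)$. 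I would fix $\theta$ so small that $C_\star c_\omega^2 \theta^{2(1-\beta_0)} < 1$, which is possible since $\beta_0 < 1$, and iterate along the geometric sequence $r_k = \theta^k r$. Using \eqref{eq:omega condition} to compare $\omega(\theta^j r)$ with $\omega(\theta^k r)$ turns the summed remainders into a convergent geometric series with limit $\lesssim \omega(\theta^k r)^2 \|\bfF\|_{\mathcal L^{\omega}(\mathbb H)}^2$. The initial term $(C_\star\theta^2)^k \Psi(x_0, r)$ is absorbed using the macroscopic bound $\Psi(x_0, r) \lesssim r^{-n} \|\bfF\|_{L^2(\mathbb H)}^2$ supplied by the $L^2$-theory. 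By minimality of the mean, one may replace $(\partial_n\bfu \otimes e_n)_{B^+_r}$ with $(\nabla\bfu)_{B^+_r}$ inside $\Psi$, and so the Campanato oscillation of $\nabla\bfu$ and of $\pi$ on half-balls centred on $\partial\mathbb H$ is bounded as required.

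For interior centres with $B_{2r}(x_0) \subset \mathbb H$, the analogue of Theorem~\ref{thm:decay} for balls disjoint from the boundary is classical (cf.~\cite{GM} or \cite[Lemma 3.5]{FuS}), and the same iteration argument applies. For $x_0 \in \mathbb H$ with $\dist(x_0, \partial\mathbb H) < r$, I would select a boundary point $\bar x_0$ with $|x_0 - \bar x_0| < r$ and control the oscillation on $B_r(x_0) \cap \mathbb H$ by that on $B^+_{2r}(\bar x_0)$, the two half-balls having comparable Lebesgue measure. Taking the supremum over all $x_0 \in \overline{\mathbb H}$ and $r > 0$ yields the desired Campanato estimate. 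The main obstacle is the boundary iteration: because the decay of Theorem~\ref{thm:decay} is phrased in terms of $(\partial_n\bfu \otimes e_n)_{B^+_r}$ rather than the true mean $(\nabla\bfu)_{B^+_r}$, the iteration must be carried out with the coupled velocity--pressure excess $\Psi$, and only at the very last step may one pass to the ordinary mean via minimality.
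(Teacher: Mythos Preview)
Your approach is essentially the paper's: reduce to $h=0$ via a Bogovskii corrector, then combine the boundary decay of Theorem~\ref{thm:decay} with the classical interior analogue and the doubling condition \eqref{eq:omega condition}, treating balls that meet $\partial\mathbb H$ by shifting to a boundary centre. Two technical points differ. First, instead of iterating, the paper divides \eqref{est:halfu} by $\omega(\theta r)$, uses \eqref{eq:omega condition} to produce a factor $\theta^{2-\beta_0}$, takes the supremum over $r$, and absorbs in one step; your explicit iteration is equally valid and has the mild advantage of not requiring the Campanato seminorm of $\nabla\bfu$ to be finite a priori (though it introduces an $L^2$-contribution from the initial scale). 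Second, and more importantly, the Bogovskii step is where your argument is thin: a Whitney-type gluing as in \cite[Section III.3]{Ga} gives $W^{1,p}$-bounds for the divergence corrector, but Campanato estimates for such a glued operator are not in that reference and are not automatic, since oscillations must be controlled uniformly across cubes at all scales. The paper bypasses this by invoking \cite{CM}, which identifies $\mathrm{Bog}_{\mathbb H}$ as a pseudodifferential operator of H\"ormander class $S^{-1}_{1,0}$, together with \cite{Pe} for the boundedness of such operators on Campanato spaces; this yields $\|\nabla\mathrm{Bog}_{\mathbb H} h\|_{\mathcal L^\omega(\mathbb H)} \lesssim \|h\|_{\mathcal L^\omega(\mathbb H)}$ directly. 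You should either adopt this route or supply the missing Campanato estimate for your glued operator.
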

\begin{remark}
Note that we even prove a stronger statement. In fact, we have a pointwise estimate in the spirit of \cite{BCDKS}: It holds
\begin{align*}
\mathcal M^{\sharp,2}_{\omega,\mathbb H}(\nabla\bfu)(x_0)\lesssim \mathcal M^{\sharp,2}_{\omega,\mathbb H}(\nabla\bfu)(x_0)
\end{align*}
for all $x_0\in\partial\mathbb H$ (an estimate for interior points can be obtained analogously).
\end{remark}
\begin{proof}[Proof of Corollary \ref{cor:camH}]
Let us first consider the case $h=0$. The general case follows by replacing $\bfu$ by $\bfv:=\bfu-\mathrm{Bog}_{\mathbb H}(h)$, where $\mathrm{Bog}_{\mathbb H}$ is the Bogovskii-operator on $\mathbb H$. As shown in \cite{CM}
it is a pseudodifferential operator belonging to the H\"ormander class $S^{-1}_{1,0}$ and thus $\nabla\mathrm{Bog}_{\mathbb H}$ is continuous on Campanto spaces as proved in \cite{Pe}.

Dividing \eqref{est:halfu} by $\omega(\theta r)$ and using
\eqref{eq:omega condition} shows
\begin{align*}
\frac{1}{\omega(\theta r)}&\dashint_{B^+_{\theta r}(x_0)}|\nabla\bfu-(\partial_n\bfu\otimes e_n)_{B^+_{\theta r}(x_0)}|^2\dx+\frac{1}{\omega(\theta r)}\dashint_{B^+_{\theta r}(x_0)}|\pi-(\pi)_{B^+_{\theta r}(x_0)}|^2\dx\\&\lesssim \theta^{2-\beta_0} \frac{1}{\omega(r)}\dashint_{B^+_{r}(x_0)}|\nabla\bfu-(\partial_n\bfu\otimes e_n)_{B^+_{ r}(x_0)}|^2\dx
+c_\theta\frac{1}{\omega(r)} \dashint_{B^+_{r}(x_0)}|\bfF-(\bfF)_{B^+_{ r}(x_0)}|^2\dx
\end{align*}
and thus
\begin{align*}
\sup_r\frac{1}{\omega(r)}&\dashint_{B^+_{r}(x_0)}|\nabla\bfu-(\partial_n\bfu\otimes e_n)_{B^+_{ r}(x_0)}|^2\dx+\sup_r\frac{1}{\omega(r)}\dashint_{B^+_{r}(x_0)}|\pi-(\pi)_{B^+_{ r}(x_0)}|^2\dx\\&\lesssim \theta^{2-\beta_0} \sup_r\frac{1}{\omega(r)}\dashint_{B^+_{r}(x_0)}|\nabla\bfu-(\partial_n\bfu\otimes e_n)_{B^+_{ r}(x_0)}|^2\dx
+c_\theta\sup_r\frac{1}{\omega(r)} \dashint_{B^+_{r}(x_0)}|\bfF-(\bfF)_{B^+_{ r}(x_0)}|^2\dx
\end{align*}
For $\theta$ small enough the first term on the right-hand side can be absorbed and we obtain
\begin{align}\label{eq:2403}
(\mathcal M^{\sharp,2}_{\omega,\mathbb H}\nabla\bfu)(x_0)+(\mathcal M^{\sharp,2}_{\omega,\mathbb H}\pi)(x_0)
\lesssim (\mathcal M^{\sharp,2}_{\omega,\mathbb H}\bfF)(x_0)\leq  \|\bfF\|_{\mathcal L^{\omega}(\mathbb H)}.
\end{align}
Note that we replaced $(\partial_n\bfu\otimes e_n)_{B^+_{ r}(x_0)}$ by $(\nabla\bfu)_{B^+_{ r}(x_0)}$ on the left-hand side using minimality of the mean value.
In order to obtain a corresponding estimate for interior points we have to distinguish between two cases. First let
$x_0\in\mathbb H$ and $r>0$ be such that $B_r(x_0)\subset\mathbb H$. Then a version of \eqref{est:halfu} with full balls is well known (see, e.g., \cite{GM} or \cite[Lemma 3.5]{FuS}), i.e., we have
\begin{align*}
\dashint_{B_{\theta r}(x_0)}|\nabla\bfu-(\nabla\bfu)_{B_{\theta r}(x_0)}|^2\dx&\lesssim \theta^2 \dashint_{B_{r}(x_0)}|\nabla\bfu-(\nabla\bfu)_{B_{ r}(x_0)}|^2\dx\\&+c_\theta \dashint_{B_{r}(x_0)}|\bfF-(\bfF)_{B_{ r}(x_0)}|^2\dx\end{align*}
together with an according estimate for the pressure,\footnote{Arguing as in \eqref{eq:comparison2} and \eqref{eq:comparison2B} one can control the oscillations of the pressure by that of the velocity gradient using the equation together with the negative norm theorem.} that is
\begin{align*}
\dashint_{B_{\theta r}(x_0)}|\nabla\bfu-(\nabla\bfu)_{B_{\theta r}(x_0)}|^2\dx&+\dashint_{B_{\theta r}(x_0)}|\pi-(\pi)_{B_{\theta r}(x_0)}|^2\dx\\&\lesssim \theta^2 (\mathcal M^{\sharp,2}_{\omega,\mathbb H}\nabla\bfu)(x_0)+
+c_\theta \|\bfF\|_{\mathcal L^{\omega}(\mathbb H)}
\end{align*}
If $B_r(x_0)\cap\partial\mathbb H\neq\emptyset$ there is a point $x_0'\in\partial\mathbb H$ such that $|x_0-x_0'|\leq r$ and
$B_r(x_0)\subset 2B_r^+(x_0')\subset 6B_r(x_0)$. Hence we have
\begin{align*}
\dashint_{B_{\theta r}^+(x_0)}|\nabla\bfu&-(\nabla\bfu)_{B_{\theta r}(x_0)}|^2\dx+\dashint_{B_{\theta r}^+(x_0)}|\pi-(\pi)_{B_{\theta r}(x_0)}|^2\dx\\&\lesssim \dashint_{B^+_{2\theta r}(x_0')}|\nabla\bfu-(\nabla\bfu)_{B^+_{2\theta r}(x_0')}|^2\dx+\dashint_{B^+_{2\theta r}(x_0')}|\pi-(\pi)_{B^+_{2\theta r}(x_0')}|^2\dx\lesssim \|\bfF\|_{\mathcal L^{\omega}(\mathbb H)}
\end{align*}
using \eqref{eq:2403}.
Combining both cases (with an appropriate choice of $\theta$), we have shown
\begin{align}\label{eq:2403b}
(\mathcal M^{\sharp,2}_{\omega,\mathbb H}\nabla\bfu)(x_0)+(\mathcal M^{\sharp,2}_{\omega,\mathbb H}\pi)(x_0)
\lesssim   \|\bfF\|_{\mathcal L^{\omega}(\mathbb H)}
\end{align}
for interior points $x_0\in\mathbb H$. Plugging  \eqref{eq:2403} and \eqref{eq:2403b} together yields the claim taking the supremum with respect to $x_0$.
\end{proof}
We obtain an analogous result for the problem in non-divergence form as a corollary of Theorem \ref{thm:decayf} which can be proved along the same lines.
\begin{corollary}\label{cor:camH2}
Let $\omega$ be a parameter function satisfying \eqref{eq:omega condition}.
Suppose that we have $\bff\in L^2(\mathbb H)\cap \mathcal L^\omega(\mathbb H)$, $h\in \mathcal D^{1,2}(\mathbb H)\cap W^1\mathcal L^\omega(\mathbb H)$ and that
 $(\bfu,\pi)\in \mathcal D^{2,2}(\mathbb H)\times \mathcal D^{1,2}(\mathbb H)$ is a solution to \eqref{eq:Stokeshalf}. Then we have 
\begin{align}
\|\nabla^2\bfu\|_{\mathcal L^{\omega}(\mathbb H)}+\|\nabla\pi\|_{\mathcal L^{\omega}(\mathbb H)}\lesssim \|\bff\|_{\mathcal L^{\omega}(\mathbb H)}+ \|h\|_{W^1\mathcal L^{\omega}(\mathbb H)}.
\end{align}
\end{corollary}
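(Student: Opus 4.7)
The plan is to mirror the proof of Corollary \ref{cor:camH} line by line, replacing Theorem \ref{thm:decay} by Theorem \ref{thm:decayf} throughout. First I would reduce to the case $h=0$ by setting $\bfv:=\bfu-\mathrm{Bog}_{\mathbb H}(h)$, so that $(\bfv,\pi)$ solves \eqref{eq:Stokeshalf} with force $\bff$ and $\Div\bfv=0$. Because $\mathrm{Bog}_{\mathbb H}$ is a pseudodifferential operator in the H\"ormander class $S^{-1}_{1,0}$ (see \cite{CM}) and hence continuous from $\mathcal L^\omega(\mathbb H)$ into $W^1\mathcal L^\omega(\mathbb H)$ by \cite{Pe}, one has
$$\|\nabla^2\mathrm{Bog}_{\mathbb H}(h)\|_{\mathcal L^\omega(\mathbb H)}\lesssim\|\nabla h\|_{\mathcal L^\omega(\mathbb H)}\leq\|h\|_{W^1\mathcal L^\omega(\mathbb H)},$$
so this correction is absorbed into the right-hand side of the claimed bound.

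Next I would treat a boundary point $x_0\in\partial\mathbb H$ exactly as in Corollary \ref{cor:camH}, but starting from \eqref{est:halfu} in its non-divergence form. Dividing by $\omega(\theta r)$ and invoking \eqref{eq:omega condition} produces a factor $\theta^{2-\beta_0}$ in front of the leading term on the right-hand side. Taking the supremum over $r>0$ and choosing $\theta$ small enough to absorb this term yields
$$\bigl(\mathcal M^{\sharp,2}_{\omega,\mathbb H}\nabla^2\bfu\bigr)(x_0)+\bigl(\mathcal M^{\sharp,2}_{\omega,\mathbb H}\nabla\pi\bigr)(x_0)\lesssim\|\bff\|_{\mathcal L^\omega(\mathbb H)},$$
where minimality of the mean-value is used to replace $(\partial_n\nabla\bfu\otimes e_n)_{B_r^+(x_0)}$ by the full mean $(\nabla^2\bfu)_{B_r^+(x_0)}$ on the left.

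For interior points $x_0\in\mathbb H$ I would split into the same two subcases as in Corollary \ref{cor:camH}. If $B_r(x_0)\subset\mathbb H$, a classical interior Campanato-type decay estimate for the Stokes system on full balls (cf.\ \cite{GM} or \cite[Lemma 3.5]{FuS}, with the pressure oscillations controlled by the velocity gradient via the negative-norm theorem in the spirit of \eqref{eq:comparison2}--\eqref{eq:comparison2B}) yields the analogous bound. If instead $B_r(x_0)\cap\partial\mathbb H\neq\emptyset$, I would pick $x_0'\in\partial\mathbb H$ with $|x_0-x_0'|\leq r$ and exploit the inclusions $B_r(x_0)\subset 2B_r^+(x_0')\subset 6B_r(x_0)$ to reduce to the boundary estimate already derived. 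Taking the supremum over $x_0\in\mathbb H$ then concludes the argument.

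The genuinely new work lies in Theorem \ref{thm:decayf}, which is already at our disposal. At the level of this corollary the only delicate input is the order-$-1$ continuity of the half-space Bogovskii operator on Campanato spaces, needed for the reduction to $h=0$; everything else is a standard boundary/interior splitting together with the $\theta^{2-\beta_0}$ absorption driven by \eqref{eq:omega condition}. I expect that the cleanest presentation will simply refer back to the absorption scheme of Corollary \ref{cor:camH}, since only the quantities being oscillated (namely $\nabla^2\bfu$ and $\nabla\pi$ in place of $\nabla\bfu$ and $\pi$) change, while the structural inequalities are identical.
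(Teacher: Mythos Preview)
Your proposal is correct and follows exactly the approach the paper has in mind: the paper simply states that Corollary~\ref{cor:camH2} ``can be proved along the same lines'' as Corollary~\ref{cor:camH}, using Theorem~\ref{thm:decayf} in place of Theorem~\ref{thm:decay}. One small slip: after subtracting $\mathrm{Bog}_{\mathbb H}(h)$ the pair $(\bfv,\pi)$ does not solve \eqref{eq:Stokeshalf} with the \emph{same} force $\bff$, but with $\bff+\Delta\mathrm{Bog}_{\mathbb H}(h)$; however, your subsequent bound $\|\nabla^2\mathrm{Bog}_{\mathbb H}(h)\|_{\mathcal L^\omega}\lesssim\|h\|_{W^1\mathcal L^\omega}$ handles this extra term, so the argument goes through unchanged.
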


\section{The Stokes equations in rough domains}
This section is devoted to the study of the Stokes equations
in a domain $\mathcal O\subset\R^n$ with minimal regularity in Campanato spaces. The heart of the paper is contained in Section \ref{sec:stokessteady} where we study the problem with right-hand side in divergence-form.


\subsection{The problem in divergence form}\label{sec:stokessteady}
In this section we consider the steady Stokes system
\begin{align}\label{eq:Stokes}
\Delta \bfu-\nabla\pi=-\Div\bfF,\quad\Div\bfu=0,\quad\bfu|_{\partial{\Omega}}=0,
\end{align}
in a domain ${\Omega}\subset\R^n$. The result given in the following theorem is a maximal regularity estimate for the solution in terms of the right-hand side under minimal assumption on the regularity of $\partial\Omega$. The sharpness of the assumptions is demonstrated in \cite[Theorem 2.2]{BCDS} for the Laplace equation.  
\begin{theorem}\label{thm:stokessteady}
Let $\omega$ be a parameter function satisfying \eqref{eq:omega condition} and $\bfF\in\mathcal L^\omega(\Omega)$.
 Suppose that ${\Omega}$ is a bounded $C^1$-domain of class $W^1\mathcal L^\sigma(\delta)$ for some sufficiently small $\delta$, where
\begin{align*}
\sigma(r):=\omega(r)\bigg(\int_{r}^1\frac{\omega(\rho)}{\rho}\,\dd\rho\bigg)^{-1}.
\end{align*}
Then there is a unique solution to \eqref{eq:Stokes} and we have
\begin{align}\label{eq:main}
\|\nabla\bfu\|_{\mathcal L^\omega(\Omega)}+\|\pi\|_{\mathcal L^\omega(\Omega)}\lesssim\|\bfF\|_{\mathcal L^\omega(\Omega)}.
\end{align}
The constant in \eqref{eq:main} depends on the 
$\mathcal L^{\sigma}$-norms of the local charts in the parametrisation of  $\partial\Omega$.
\end{theorem}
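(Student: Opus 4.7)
The plan is to reduce \eqref{eq:Stokes} to the half-space setting of Corollary \ref{cor:camH} via localisation and chart flattening, and to absorb the variable-coefficient error terms produced by the flattening using the Campanato multiplier identity \eqref{eq:cm}. Existence and uniqueness of $(\bfu,\pi)\in W^{1,2}_{0,\Div}(\Omega)\times L^2_\perp(\Omega)$ are classical for Lipschitz domains, cf.\ \cite[Chapter IV]{Ga}, so I focus on the a priori estimate \eqref{eq:main}.

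Fix a finite open cover of $\overline\Omega$ by one interior set $\mathcal V^0\Subset\Omega$ together with the boundary neighbourhoods $\mathcal U^1,\dots,\mathcal U^\ell$ from Section~\ref{sec:para}, and a subordinated partition of unity. The piece supported in $\mathcal V^0$ is controlled by the classical interior Campanato estimate for the Stokes system (the interior analogue of Corollary \ref{cor:camH}, already invoked in its proof). For each $j\ge 1$, use the chart $\bfPhi=\bfPhi_j$ from \eqref{PSI} to set $\tilde\bfu(y)=\bfu(\bfPsi(y))$ and $\tilde\pi(y)=\pi(\bfPsi(y))$. Since $\det\bfJ\equiv 1$ and $\bfJ-I$ has nonzero entries only involving $\nabla\varphi_j$, a direct computation turns the localised system into a half-space Stokes problem of the form
\begin{align*}
\Delta\tilde\bfu-\nabla\tilde\pi=-\Div\bigl(\tilde\bfF+\bfG_1(\nabla\varphi,\nabla\tilde\bfu)+\bfG_2(\nabla\varphi,\tilde\pi)\bigr),\quad \Div\tilde\bfu=\tilde h,\quad \tilde\bfu|_{\partial\mathbb H}=0,
\end{align*}
on $B_h^+(0)$, where $\tilde\bfF$ is the pull-back of $\bfF$ (plus commutators from the cut-off), and $\bfG_1,\bfG_2,\tilde h$ are polynomials in $\nabla\varphi$ vanishing at $\nabla\varphi=0$ and linear in $(\nabla\tilde\bfu,\tilde\pi)$.

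Applying Corollary \ref{cor:camH} to this system yields
\begin{align*}
\|\nabla\tilde\bfu\|_{\mathcal L^\omega(\mathbb H)}+\|\tilde\pi\|_{\mathcal L^\omega(\mathbb H)}\lesssim \|\tilde\bfF\|_{\mathcal L^\omega(\mathbb H)}+\|\bfG_1\|_{\mathcal L^\omega(\mathbb H)}+\|\bfG_2\|_{\mathcal L^\omega(\mathbb H)}+\|\tilde h\|_{\mathcal L^\omega(\mathbb H)}.
\end{align*}
The main difficulty is that $\nabla\varphi$ lies only in $\mathcal L^\sigma$, which in the $\BMO$ limit is strictly smaller than $\mathcal L^\omega$ and need not embed into $L^\infty$, so elementary $L^\infty$-type multiplier bounds of the form \eqref{eq:cm+} are unavailable. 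I resolve this via the Nakai characterisation \eqref{eq:cm}: the identity $\mathscr M^\omega(\mathbb H)=\mathcal L^\sigma(\mathbb H)$ implies that multiplication by each component of $\nabla\varphi$ is bounded on $\mathcal L^\omega$ with operator norm controlled by $\|\nabla\varphi\|_{\mathcal L^\sigma}\lesssim\delta$. Hence
\begin{align*}
\|\bfG_1\|_{\mathcal L^\omega(\mathbb H)}+\|\bfG_2\|_{\mathcal L^\omega(\mathbb H)}+\|\tilde h\|_{\mathcal L^\omega(\mathbb H)}\lesssim \delta\bigl(\|\nabla\tilde\bfu\|_{\mathcal L^\omega(\mathbb H)}+\|\tilde\pi\|_{\mathcal L^\omega(\mathbb H)}\bigr),
\end{align*}
and for $\delta$ sufficiently small the error contribution is absorbed into the left-hand side.

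Transferring back to $\Omega$ via \eqref{lem:9.4.1} gives local boundary estimates for $(\nabla\bfu,\pi)$ on each $\Omega\cap\mathcal U^j$ controlled by $\|\bfF\|_{\mathcal L^\omega(\Omega)}$, up to lower-order commutator contributions of the form $\|\bfu\|_{L^2(\Omega)}+\|\pi\|_{L^2(\Omega)}$ arising from the cut-off. Summing over $j=1,\dots,\ell$ and combining with the interior estimate yields \eqref{eq:main} modulo these lower-order terms, which are in turn dominated by the classical $L^2$-bound $\|\nabla\bfu\|_{L^2(\Omega)}+\|\pi\|_{L^2(\Omega)}\lesssim\|\bfF\|_{L^2(\Omega)}\lesssim\|\bfF\|_{\mathcal L^\omega(\Omega)}$. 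Uniqueness is inherited from the $L^2$-theory.
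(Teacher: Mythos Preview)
Your overall strategy---localise with a partition of unity, flatten via the charts $\bfPhi_j$, apply Corollary~\ref{cor:camH} on the half space, and absorb the variable-coefficient remainders using Nakai's identification $\mathscr M^\omega=\mathcal L^\sigma$---matches the paper's. There are, however, two genuine gaps.

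The substantive one concerns the cut-off commutators. After localising with $\xi_j$ the momentum equation picks up the extra source $[\Delta,\xi_j]\bfu-[\nabla,\xi_j]\pi+[\Div,\xi_j]\bfF$, and the divergence constraint acquires $h_j=(\nabla\xi_j\cdot\bfu)\circ\bfPsi_j$; neither vanishes at $\nabla\varphi=0$, so they are not part of the $\delta$-absorbable error (in particular your description of $\tilde h$ as ``a polynomial in $\nabla\varphi$ vanishing at $\nabla\varphi=0$'' omits the second contribution). To recast the first as a divergence one writes $\bfG_j=\nabla\Delta_{\mathbb H}^{-1}(\cdot)$ and then needs $\|\bfG_j\|_{\mathcal L^\omega}$ and $\|h_j\|_{\mathcal L^\omega}$. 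Your claim that these are controlled by $\|\bfu\|_{L^2}+\|\pi\|_{L^2}$ fails: $\nabla\Delta^{-1}$ maps $L^2$ only into $W^{1,2}$, which in dimension $n\geq 3$ does not embed into $\BMO$, and in no dimension embeds into $C^{0,\alpha}=\mathcal L^{r^\alpha}$. The paper closes this by invoking the $L^q$-theory for the Stokes system in $C^1$ (Reifenberg-flat) domains from \cite{BH}: with $q=n/(1-\beta_0)$ and $\beta_0$ from \eqref{eq:omega condition} one has the chain $W^{1,q}\hookrightarrow C^{0,\beta_0}\hookrightarrow\mathcal L^\omega$, whence $\|\bfG_j\|_{\mathcal L^\omega}\lesssim\|\bfG_j\|_{W^{1,q}}\lesssim\|\nabla\bfu\|_{L^q}+\|\pi\|_{L^q}+\|\bfF\|_{L^q}\lesssim\|\bfF\|_{L^q}$, and similarly for $h_j$. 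The $L^2$ energy bound alone is insufficient here.

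The second gap is the absorption step itself: subtracting $\delta\big(\|\nabla\tilde\bfu\|_{\mathcal L^\omega}+\|\tilde\pi\|_{\mathcal L^\omega}\big)$ from the left-hand side requires that quantity to be finite a priori, which is not guaranteed by mere $W^{1,2}$-existence. The paper first runs the argument under the additional hypothesis that $(\bfu,\pi)$ is smooth, and then removes this by mollifying the boundary charts $\varphi_j$, approximating $\Omega$ from the outside by smooth domains on which the computation is rigorous and the constants are uniform in the regularisation parameter.
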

\begin{proof}
Since $\mathcal L^{\omega}(\Omega)\hookrightarrow L^2(\Omega)$ a unique weak solution $(\bfu,\pi)\in W^{1,2}_{0,\Div}({\Omega})\times L^2_{\perp}({\Omega})$ to \eqref{eq:Stokes} clearly exists. Let us initially suppose that $\bfu$ and $\pi$ are sufficiently smooth such that the following calculations are justified. We will remove this assumption at the end of the proof.
By assumption there is $\ell\in\mathbb N$ and functions $\varphi_1,\dots,\varphi_\ell\in C^1$ satisfying \ref{A1}--\ref{A3} describing a part $\mathcal U^j$ of the boundary $\partial\Omega$ (such that $\partial\Omega\subset\cup_{j=1}^\ell \mathcal U^j$). In accordance with \eqref{PSI} we define functions $\bfPhi_1,\dots,\bfPhi_\ell\in W^1\mathcal L^\sigma$ along with their inverses $\bfPsi_1,\dots,\bfPsi_\ell$.
We clearly find an open set $\mathcal U^0\Subset{\Omega}$ such that ${\Omega}\subset \cup_{j=0}^\ell \mathcal U^j$. Finally, we consider a decomposition of unity $(\xi_j)_{j=0}^\ell$ with respect to the covering
$\mathcal U^0,\dots,\mathcal U^\ell$ of ${\Omega}$. 
 
 Let us fix $j\in\{1,\dots,\ell\}$ and assume, without loss of generality, that the reference point $y_j=0$ and that the outer normal at~$0$ is pointing in the negative $x_n$-direction (this saves us some notation regarding the translation and rotation of the coordinate system).
We multiply $\bfu$ by $\xi_j$ and obtain for $\bfu_j:=\xi_j\bfu$, $\Pi_j:=\xi_j\pi$ and $\bfF_j:=\xi_j\bfF$ the equation
\begin{align}\label{eq:Stokes2}
\Delta \bfu_j-\nabla\Pi_j=-\Div\bfF_j+[\Delta,\xi_j]\bfu-[\nabla,\xi_j]\Pi+[\Div,\xi_j]\bfF,\quad\Div\bfu_j=\nabla\xi_j\cdot\bfu,\quad\bfu_j|_{\partial{\Omega}}=0,
\end{align}
with the commutators $[\Delta,\xi_j]=\Delta\xi_j+2\nabla\xi_j\cdot\nabla$, $[\nabla,\xi_j]=\nabla\xi_j$ and $[\Div,\xi_j]=\nabla\xi_j\cdot$.
Finally, we set $\bfv_j:=\bfu_j\circ\bfPsi_j$, $\theta_j:=\Pi_j\circ\bfPsi_j$, $h_j=(\nabla\xi_j\cdot\bfu)\circ\bfPsi_j$ as well as\footnote{We denote by $\Delta^{-1}_{\mathbb H}$ the solution operator to the Laplace equation in $\mathbb H$ with homogenous Dirichlet boundary conditions on $\partial\mathbb H$.}
\begin{align*}
\bfG_j:=\nabla\Delta^{-1}_{\mathbb H}([\Delta,\xi_j]\bfu-[\nabla,\xi_j]\Pi+[\Div,\xi_j]\bfF)\circ\bfPsi_j,\quad\bfH_j:=\mathbf{B}_j\bfF_j\circ\bfPsi_j,
\end{align*}
and obtain the equations
\begin{align}\label{eq:Stokes3}
\begin{aligned}
&\Div\big(\bfA_j\nabla\bfv_j)-\Div(\mathbf{B}_j\theta_j)=-\Div(\bfH_j+\bfG_j),
\quad\mathbf{B}_j^\top:\nabla\bfv_j=h_j,\quad\bfv_j|_{\partial\mathbb H}=0,
\end{aligned}
\end{align}
in $\mathbb H$, where $\bfA_j:=\nabla\bfPhi_j^\top\circ\bfPsi_j\nabla\bfPhi_j\circ\bfPsi_j$ and $\mathbf{B}_j:=\nabla\bfPhi_j\circ\bfPsi_j$
 (note that we have $\Div\mathbf{B}_j=0$ due to the Piola identity).
 This can be rewritten as
\begin{align}\label{eq:Stokes3}
\begin{aligned}
\Delta\bfv_j-\nabla\theta_j&=\Div\big((\mathbb I_{n\times n}-\bfA_j)\nabla\bfv_j)+\Div((\mathbf{B}_j-\mathbb I_{n\times n})\theta_j)-\Div(\bfH_j+\bfG_j),\\
&\Div\bfv_j=(\mathbb I_{n\times n}-\mathbf{B}_j)^\top:\nabla\bfv_j+h_j,\quad\bfv_j|_{\partial\mathbb H}=0.
\end{aligned}
\end{align}
Setting 
\begin{align*}
\mathcal S (\bfv,\theta)&=\mathcal S _1(\bfv)+\mathcal S_2 (\theta),\\
\mathcal S_1 (\bfv)&=(\mathbb I_{n\times n}-\bfA_j)\nabla\bfv,\\
\mathcal S_2 (\theta)&=(\mathbf{B}_j-\mathbb I_{n\times n})\theta,\\
\mathfrak s(\bfv)&=(\mathbb I_{n\times n}-\mathbf{B}_j)^\top:\nabla\bfv,
\end{align*} we can finally write
\eqref{eq:Stokes3} as
\begin{align}\label{eq:Stokes3'}
\begin{aligned}
\Delta\bfv_j-\nabla\theta_j=\Div\mathcal S (\bfv_j,\theta_j)-\Div(\bfH_j+\bfG_j),
\quad\Div\bfv_j=\mathfrak s(\bfv_j)+h_j,\quad\bfv_j|_{\partial\mathbb H}=0,
\end{aligned}
\end{align}
in $\mathbb H.$
We can now apply the estimates for the half space from Corollary \ref{cor:camH} obtaining
\begin{align}\label{eq:0201c}
\|\nabla\bfv_j\|_{\mathcal L^\omega(\mathbb H)}+\|\theta_j\|_{\mathcal L^\omega(\mathbb H)}\lesssim \|\mathcal S (\bfv_j,\theta_j)-\bfH_j-\bfG_j\|_{\mathcal L^\omega(\mathbb H)}+\|\mathfrak s(\bfv_j)+h_j\|_{\mathcal L^\omega(\mathbb H)}.
\end{align}
Our remaining task consists in estimating the right-hand side.
In order to do so we employ Campanato multipliers as introduced in Section \ref{subsec:cs}.
By assumption we have $\varphi_j\in \mathcal L^\sigma$ with norm bounded by $\delta$ such that, as a consequence of \eqref{eq:cm},
the $\mathscr M^\omega$-norm of $\nabla\varphi_j$ is bounded by $\delta$ as well (note that the $L^1$-part in \eqref{eq:cm} can be neglected as we have $\nabla\varphi_j(0)=0$).
 Hence we obtain
by \eqref{J}, \eqref{lem:9.4.1} and the definitions of $\bfA_j$ and $\bfPsi_j$ 
\begin{align*}
\|\mathcal S_1(\bfv_j)\|_{\mathcal L^\omega(\mathbb H)}&\lesssim \|\mathbb I_{n\times n}-\bfA_j\|_{\mathscr M^\omega(\bfPhi_j(\mathcal U^j\cap \Omega))}\|\nabla\bfv_j\|_{\mathcal L^\omega(\mathbb H)}\\
&\lesssim\|\mathbb I_{n\times n}-\nabla\bfPhi_j^\top\circ\bfPsi_j\|_{\mathscr M^\omega(\bfPhi_j(\mathcal U^j\cap \Omega))}\|\nabla\bfv_j\|_{\mathcal L^\omega(\mathbb H)}\\&+\|\nabla\bfPhi_j^\top\circ\bfPsi_j(\mathbb I_{n\times n}-\nabla\bfPhi_j\circ\bfPsi_j)\|_{\mathscr M^\omega((\bfPhi_j(\mathcal U^j\cap \Omega)))}\|\nabla\bfv_j\|_{\mathcal L^\omega(\mathbb H)}\\
&\lesssim\big(1+\|\nabla\bfPhi_j^\top\circ\bfPsi_j\|_{\mathscr M^\omega(\bfPhi_j(\mathcal U^j))}\big)\|\mathbb I_{n\times n}-\nabla\bfPhi_j^\top\circ\bfPsi_j\|_{\mathscr M^\omega(\bfPhi_j(\mathcal U^j))}\|\nabla\bfv_j\|_{\mathcal L^\omega(\mathbb H)},
\end{align*}
where, by \eqref{lem:9.4.1},
\begin{align*}
\|\nabla\bfPhi_j^\top\circ\bfPsi_j\|_{\mathscr M^\omega(\bfPhi_j(\mathcal U^j\cap \Omega))}
\lesssim \|\nabla\bfPhi_j\|_{\mathscr M^\omega(\mathbb H)}\lesssim 1+\|\nabla\varphi_j\|_{\mathscr M^\omega(\partial\mathbb H)}\lesssim 1.
\end{align*}
Furthermore, it holds
\begin{align*}
\|(\mathbb I_{n\times n}-\nabla\bfPhi_j\circ\bfPsi_j)\|_{\mathscr M^\omega(\bfPhi_j(\mathcal U^j\cap \Omega))}\lesssim \|\nabla\varphi_j\circ\bfPsi_j\|_{\mathscr M^\omega(\partial\mathbb H))}
&\lesssim \|\nabla\varphi_j\|_{\mathscr M^\omega(\partial\mathbb H))}\lesssim \delta.
\end{align*}
So we finally have
\begin{align*}
\|\mathcal S_1(\bfv_j)\|_{\mathcal L^\omega(\mathbb H)}&\lesssim \delta\|\nabla\bfv_j\|_{\mathcal L^\omega(\mathbb H)}
\end{align*}
and, similarly, 
\begin{align*}
\|\mathcal S_2(\theta_j)\|_{\mathcal L^\omega(\mathbb H)}&\lesssim \|\mathbf{B}_j-\mathbb I_{n\times n}\|_{\mathscr M^\omega(\bfPhi_j(\mathcal U^j\cap \Omega))}\|\theta_j\|_{\mathcal L^\omega(\mathbb H)}
\lesssim \delta\|\theta_j\|_{ \mathcal L^\omega(\mathbb H)},\\
\|\bfH_j\|_{\mathcal L^\omega(\mathbb H)}&\lesssim \|\mathbf{B}_j\|_{\mathscr M^\omega(\bfPhi_j(\mathcal U^j\cap \Omega))}\|\bfF_j\circ\bfPsi_j\|_{\mathcal L^\omega(\mathbb H)}
\lesssim \|\bfF_j\|_{ \mathcal L^\omega(\mathbb H)}\lesssim \|\bfF\|_{ \mathcal L^\omega(\mathbb H)},\
\end{align*}
as well as
\begin{align*}
\|\mathfrak s(\bfv_j)\|_{\mathcal L^\omega(\mathbb H)}
&\lesssim \|\mathbf{B}_j-\mathbb I_{n\times n}\|_{\mathscr M^\omega(\bfPhi_j(\mathcal U^j\cap \Omega))}\|\nabla\bfv_j\|_{\mathcal L^\omega(\mathbb H)}\lesssim\delta\|\nabla\bfv_j\|_{\mathcal L^\omega(\mathbb H)},
\end{align*}
We conclude that
 \begin{align}\label{eq:0301}
 \|\mathcal S(\bfv_j,\theta_j)\|_{\mathcal L^\omega(\mathbb H)}+\|\mathfrak s(\bfv_j)\|_{\mathcal L^\omega(\mathbb H)}\leq \delta\big(\|\nabla\bfv_j\|_{\mathcal L^\omega(\mathbb H)}+\|\theta_j\|_{\mathcal L^\omega(\mathbb H)}\big)
 \end{align}
 for some small $\delta>0$. 
In order to control the lower order term related to $\bfG_j$ we consider $\beta_0$ given in \eqref{eq:omega condition} and set
$q:=\frac{n}{1-\beta_0}$ such that the embedding $W^{1,q}\hookrightarrow C^{0,\beta_0}$ holds. Note also that we trivially have $C^{0,\beta_0}\hookrightarrow\mathcal L^{t^{\beta_0}}\hookrightarrow\mathcal L^\omega$.
These facts and the continuity of $\nabla^2\Delta_{\mathbb H}^{-1}$ on $L^{q}(\mathbb H)$ yield
 \begin{align*}
 \|\bfG_j\|_{\mathcal L^\omega(\mathbb H)}&\lesssim \|\bfG_j\|_{C^{0,\beta_0}(\mathbb H)}\lesssim  \|\bfG_j\|_{W^{1,q}(\mathbb H)}\\
&\lesssim\|\nabla\bfu\circ\bfPsi_j\|_{L^{q}(\bfPhi_j(\mathcal U^j\cap \Omega))}+\|\pi\circ\bfPsi_j\|_{L^{q}(\bfPhi_j(\mathcal U^j\cap \Omega))}+ \|\bfF\circ\bfPsi_j\|_{L^{q}(\bfPhi_j(\mathcal U^j\cap \Omega))}\\
&\lesssim\|\nabla\bfu\|_{L^{q}(\Omega)}+\|\pi\|_{L^{q}(\Omega)}+ \|\bfF\|_{L^{q}(\Omega)}
 \end{align*}
cf.~ \eqref{lem:9.4.1}. By the $L^q$-theory for the Stokes system in Reifenberg-flat domains (which include $C^1$-domains) from \cite{BH} we finally obtain
 \begin{align*}
 \|\bfG_j\|_{\mathcal L^\omega(\mathbb H)}&\lesssim \|\bfF\|_{L^{q}(\Omega)}\lesssim \|\bfF\|_{ \mathcal L^{\omega}(\Omega)}
 \end{align*}
noticing that we can assume that $(\bfF)_{\Omega}=0$.
 Similarly, we obtain
 \begin{align*}
 \|h_j\|_{\mathcal L^\omega(\mathbb H)}&\lesssim \|\bfF\|_{\mathcal L^\omega(\Omega)}.
 \end{align*}

 Plugging this and \eqref{eq:0301} into \eqref{eq:0201c} shows for all $j\in\{1,\dots,\ell\}$
 \begin{align}\label{almost0}
\|\nabla\bfv_j\|_{\mathcal L^\omega(\mathbb H)}+\|\theta_j\|_{\mathcal L^\omega(\mathbb H)}\lesssim \delta
 \|\nabla\bfu\|_{\mathcal L^\omega(\mathbb H)}+ \delta \|\pi\|_{\mathcal L^\omega(\mathbb H)}+\|\bfF\|_{\mathcal L^\omega(\Omega)}.
\end{align}
Clearly, the same estimate without the first two terms on the right-hand side holds for $j=0$ by local regularity theory for the Stokes system. In fact, one can argue as in the proof of Theorem \ref{thm:decay} and combine well-known homogeneous decay estimates (see, e.g., \cite{GM} or \cite[Lemma 3.5]{FuS}) with a comparison estimate and eventually argue similarly to the proof of Corollary \ref{cor:camH} to obtain local Campanato estimates. Such a strategy is used in \cite{DKS2} for Stokes systems related to non-Newtonian fluids. 
Hence we obtain, transforming back to $\Omega$, using \eqref{lem:9.4.1} and writing $\bfu=\sum_j \bfu_j$ as well as $\pi=\sum_j \pi_j$  
  \begin{align}\label{almost}
\|\nabla\bfu\|_{\mathcal L^\omega(\Omega)}+\|\pi\|_{\mathcal L^\omega(\Omega)}\lesssim \delta
 \|\nabla\bfu\|_{\mathcal L^\omega(\Omega)}+ \delta \|\pi\|_{\mathcal L^\omega(\Omega)}+\|\bfF\|_{\mathcal L^\omega(\Omega)}.
 \end{align}
If $\delta$ is small enough this proves the claim provided $\bfu$ and $\pi$ are sufficiently smooth.

In the last step we are going to remove this additional smoothness assumption by a standard regularisation procedure as in \cite{Br} (see also \cite[Section 4]{CiMa}). 
Applying a mollifying kernel to the functions $\varphi_1,\dots,\varphi_\ell$ from \ref{A1}--\ref{A3} in the parametrisation of $\partial{\Omega}$ we obtain a smooth boundary. This leads to a sequence of smooth solutions for which the previous computations are fully justified. It is possible to do this in a way that the original domain is included in the regularised domain to which we extend the function $\bfF$ by means of an extension operator. Now it is sufficient to know that the Campanato semi-norm of the boundary charts does not expand when regularising them, which follows easily from the definition of the former.
\end{proof}

The following Theorem gives an improvement of Theorem \ref{thm:stokessteady} under the additional assumption that the parameter function $\omega$ satisfies the Dini condition: there is no gap between the regularity of the boundary charts (located in $\mathcal L^\sigma$) and that of the forcing (located in $\mathcal L^\omega$) and no smallness condition is needed. It is a counterpart of \cite[Theorem 2.6]{BCDS}, where a corresponding result is proved for the ($p$-)Laplace equation.
\begin{theorem}\label{thm:stokessteady2}
Let $\omega$ be a parameter function satisfying \eqref{eq:omega condition} as well as
\begin{equation}\label{dini'} \int _0 \frac{\omega (r)}r\, {\rm d} r < \infty,
\end{equation}
 and $\bfF\in\mathcal L^\omega(\Omega)$.
 Suppose that ${\Omega}$ is a bounded $C^1$-domain of class $W^1\mathcal L^\omega$.
Then there is a unique solution to \eqref{eq:Stokes} and we have
\begin{align}\label{eq:main'}
\|\nabla\bfu\|_{\mathcal L^\omega(\Omega)}+\|\pi\|_{\mathcal L^\omega(\Omega)}\lesssim\|\bfF\|_{\mathcal L^\omega(\Omega)}.
\end{align}
The constant in \eqref{eq:main'} depends on the 
$\mathcal L^{\omega}$-norms of the local charts in the parametrisation of  $\partial\Omega$.
\end{theorem}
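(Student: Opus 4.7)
The plan is to follow the overall architecture of the proof of Theorem \ref{thm:stokessteady} -- partition of unity, local flattening via $\bfPhi_j,\bfPsi_j$, reduction to the half-space system \eqref{eq:Stokes3'}, and application of Corollary \ref{cor:camH} -- but to replace the Campanato multiplier characterization \eqref{eq:cm} (which forced the smallness assumption on $\|\nabla\varphi_j\|_{\mathcal L^\sigma}$) by the trivial multiplier estimate \eqref{eq:cm+}. This substitution is legitimated precisely by the Dini condition \eqref{dini'}: via the embedding \eqref{spanne1} we have $\mathcal L^\omega(\mathcal O)\hookrightarrow C^{0,\underline\omega}(\mathcal O)\hookrightarrow L^\infty(\mathcal O)$ on bounded Lipschitz sets, so $\nabla\varphi_j\in W^1\mathcal L^\omega$ is in particular continuous, and the coordinate choice in Section~\ref{sec:para} allows us to arrange $\nabla\varphi_j(0)=0$ at each reference point.

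After localizing, flattening and applying Corollary \ref{cor:camH} to obtain the analogue of \eqref{eq:0201c}, I would estimate each perturbation term via \eqref{eq:cm+}, e.g.\
\begin{align*}
\|(\bfI-\bfA_j)\nabla\bfv_j\|_{\mathcal L^\omega(\mathbb H)}\lesssim \|\bfI-\bfA_j\|_{L^\infty}\|\nabla\bfv_j\|_{\mathcal L^\omega(\mathbb H)}+\|\nabla\bfv_j\|_{L^\infty}\|\bfI-\bfA_j\|_{\mathcal L^\omega(\mathbb H)},
\end{align*}
and analogously for $\mathcal S_2(\theta_j)$, $\mathfrak s(\bfv_j)$ and $\bfH_j$. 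Since $\nabla\varphi_j$ is continuous with $\nabla\varphi_j(0)=0$, by taking the charts $\mathcal U^j$ sufficiently small we can make $\|\bfI-\bfA_j\|_{L^\infty}$ and $\|\bfI-\bfB_j\|_{L^\infty}$ as small as desired. The first term on the right is then absorbed into the left-hand side of \eqref{eq:0201c}, replacing the role played by the smallness of $\|\nabla\varphi_j\|_{\mathscr M^\omega}$ in Theorem \ref{thm:stokessteady}. The remaining lower-order piece $\bfG_j$ is handled exactly as in Theorem \ref{thm:stokessteady} via the $L^q$-theory for the Stokes system in $C^1$-domains from \cite{BH}.

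The main obstacle will be the second term above, where $\|\bfI-\bfA_j\|_{\mathcal L^\omega}\lesssim \|\nabla\varphi_j\|_{\mathcal L^\omega}$ is only bounded, not small, so $\|\nabla\bfv_j\|_{L^\infty}$ must be controlled without re-introducing a circular dependence on $\|\nabla\bfv_j\|_{\mathcal L^\omega(\mathbb H)}$. The plan is to exploit the compact support of $\bfv_j$ and the Dini embedding to write, on the (bounded) support $\mathcal V_j:=\bfPhi_j(\mathcal U^j\cap\Omega)$,
\begin{align*}
\|\nabla\bfv_j\|_{L^\infty(\mathcal V_j)}\lesssim \|\nabla\bfv_j\|_{\mathcal L^\omega(\mathcal V_j)}+\|\nabla\bfv_j\|_{L^q(\mathcal V_j)},
\end{align*}
and to use the $L^q$-regularity of the Stokes system in $C^1$-domains to control the second summand by $\|\bfF\|_{L^q(\Omega)}\lesssim \|\bfF\|_{\mathcal L^\omega(\Omega)}$ for $q>n$. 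The residual contribution $\|\nabla\bfv_j\|_{\mathcal L^\omega}\,\|\nabla\varphi_j\|_{\mathcal L^\omega}$ is then reabsorbed by exploiting the flexibility in the cover: once $\|\nabla\varphi_j\|_{L^\infty(\mathcal U^j)}$ is small, one shows -- using the continuity of $\nabla\varphi_j$ together with the Dini decay of $\omega$ at the origin -- that the localized seminorm $\|\nabla\varphi_j\|_{\mathcal L^\omega(\mathcal U^j)}$ can be made sufficiently small on small enough charts as well, thereby closing the absorption.

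To finish, I would transform back to $\Omega$ via \eqref{lem:9.4.1}, combine with the interior chart (handled by the local regularity theory referenced after \eqref{almost0}) and sum, yielding \eqref{eq:main'}. The a priori smoothness of $(\bfu,\pi)$ needed to justify the manipulations is removed at the end by mollifying the parametrisation functions $\varphi_1,\dots,\varphi_\ell$ and passing to the limit, exactly as in Theorem \ref{thm:stokessteady} and \cite{Br}; crucially, the Campanato seminorm of the charts is non-increasing under this regularisation.
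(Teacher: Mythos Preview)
Your proposal follows the paper's architecture up to and including the decision to estimate the perturbation terms via the trivial multiplier inequality \eqref{eq:cm+} rather than \eqref{eq:cm}; this is also what the paper does. The gap lies in how you obtain the $L^\infty$-bound on $\nabla\bfv_j$ (and $\theta_j$) that \eqref{eq:cm+} requires.

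Your closing step asserts that the localized seminorm $\|\nabla\varphi_j\|_{\mathcal L^\omega(\mathcal U^j)}$ can be made small by shrinking the chart. This is false already for $\omega(r)=r^\alpha$: if $\nabla\varphi_j(x')$ behaves like $|x'|^\alpha$ near the reference point (a legitimate $W^1\mathcal L^\omega$-chart), then for every ball $B_r(0)$ with $r\leq R$ one has $\omega(r)^{-1}\big(\dashint_{B_r}|\nabla\varphi_j-(\nabla\varphi_j)_{B_r}|^2\big)^{1/2}\approx 1$, so the $\mathcal L^\omega$-seminorm restricted to $B_R(0)$ does not tend to zero as $R\to 0$. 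The Dini condition \eqref{dini'} forces $\underline\omega(R)\to 0$, but this is a statement about the embedding constant in \eqref{spanne1}, not about the Campanato seminorm on shrinking sets. Hence the residual term $\|\nabla\bfv_j\|_{\mathcal L^\omega}\,\|\nabla\varphi_j\|_{\mathcal L^\omega}$ cannot be absorbed as you claim, and the argument is circular.

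The paper avoids this by first securing the $L^\infty$-bound on $\nabla\bfu$ and $\pi$ \emph{a priori}, independently of the target $\mathcal L^\omega$-estimate. Following \cite{BCDS}, it constructs an auxiliary weight $\omega_1$ which is still Dini and for which $\omega(r)/\sigma_1(r)\to 0$ as $r\to 0$, where $\sigma_1$ is the multiplier weight \eqref{eq:sigma} associated with $\omega_1$. Since the oscillations of $\nabla\varphi_j\in\mathcal L^\omega$ at scale $r$ are $\lesssim\omega(r)$, this forces $\partial\Omega\in W^1\mathcal L^{\sigma_1}(\delta)$ for every $\delta>0$, so Theorem~\ref{thm:stokessteady} applies in the $\omega_1$-scale and, via \eqref{spanne1}, delivers $\|\nabla\bfu\|_{L^\infty}+\|\pi\|_{L^\infty}\lesssim\|\bfF\|_{\mathcal L^\omega}$. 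With this in hand, \eqref{eq:cm+} gives e.g.\ $\|\mathcal S_1(\bfv_j)\|_{\mathcal L^\omega}\lesssim \delta\|\nabla\bfv_j\|_{\mathcal L^\omega}+\|\bfF\|_{\mathcal L^\omega}$, where only the small-Lipschitz factor $\|\mathbb I_{n\times n}-\bfA_j\|_{L^\infty}$ multiplies the unknown, and the absorption closes without circularity.
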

\begin{proof}
The crucial point is that, with Theorem \ref{thm:stokessteady} at hand, we can prove that $\nabla\bfu\in L^\infty(\Omega)$ by following the ideas of \cite{BCDS}. Owing to assumption \eqref{dini'},
there exists an increasing function $\eta : (0, 1) \to (0, \infty)$ such that $\lim _{r \to 0^+} \eta (r) =0$, and still
\begin{equation}\label{july30}
\int_0 \frac{\omega(r)}{\eta (r)r}\, \mathrm{d}r <\infty\,.
\end{equation}
Next, we define the non-decreasing function $\omega_1 : (0, 1) \to (0,\infty)$ as 
$$\omega_1 (r) = \inf _{s\geq r} \frac{\omega (s)}{\eta (s)} \quad \hbox{for $r> 0$.}$$
Since $\omega_1(r)\leq \omega(r)$ for $r>0$ we have that $\partial \Omega \in W^1 \mathcal L^{\omega_1}$.
Moreover, it is shown in
\cite[proof of Theorem 2.6]{BCDS} that $\omega_1$ still satisfies \eqref{dini'} and that we have
\begin{equation}\label{july38}
\lim _{r \to 0^+} \frac{\omega (r)}{\omega _1(r)} \int _r^1\frac{\omega_1(s)}{s}\,\mathrm{d}s =0.
\end{equation}
Hence it holds $\partial\Omega\in W^1\mathcal L^{\sigma_1}(\delta)$ for all $\delta>0$, where
\begin{align*}
\sigma_1(r):=\omega_1(r)\bigg(\int_\rho^1\frac{\omega_1(\rho)}{\rho}\,\dd\rho\bigg)^{-1}.
\end{align*}
We apply now Theorem \ref{thm:stokessteady} with $\omega$ replaced by $\omega_1$ and $\sigma_1$ as given above. Owing to \eqref{spanne1} we have $\mathcal L^{\omega_1}\hookrightarrow L^\infty$ and thus
\begin{align}\label{eq.1903}
\begin{aligned}
\|\nabla\bfu\|_{L^\infty(\Omega)}+\|\pi\|_{L^\infty(\Omega)}&\lesssim\|\nabla\bfu\|_{L^{\omega_1}(\Omega)}+\|\pi\|_{L^{\omega_1}(\Omega)}\\&\lesssim \|\bfF\|_{\mathcal L^{\omega_1}(\Omega)}\lesssim \|\bfF\|_{\mathcal L^{\omega}(\Omega)}.
\end{aligned}
\end{align}
With this at hand we enter the proof of Theorem \ref{thm:stokessteady} for the estimates of $\mathcal S(\bfv,\vartheta)$ and $\mathfrak s(\bfv)$.
By \eqref{eq.1903},
estimate \eqref{eq:cm+} yields
\begin{align*}
\|\mathcal S_1(\bfv_j,\theta_j)\|_{\mathcal L^\omega(\mathbb H)}&\lesssim \|\mathbf{A}_j-\mathbb I_{n\times n}\|_{L^\infty(\bfPhi_j(\mathcal U^j\cap \Omega))}\|\nabla\bfv_j\|_{\mathcal L^\omega(\mathbb H)}+\|\mathbf{A}_j-\mathbb I_{n\times n}\|_{\mathcal L^\omega(\bfPhi_j(\mathcal U^j\cap \Omega))}\|\nabla\bfv_j\|_{L^\infty(\mathbb H)}\\
&\lesssim \delta\|\nabla\bfv_j\|_{ \mathcal L^\omega(\mathbb H)}+\|\bfF\|_{\mathcal L^{\omega}(\Omega)}
\end{align*}
using that $\partial\Omega\in W^1\mathcal L^\omega$ and thus has a small local Lipschitz constant by \eqref{spanne1}.
It can be shown with the same idea that
\begin{align*}
\|\mathcal S_2(\theta_j)\|_{\mathcal L^\omega(\mathbb H)}&
\lesssim \delta\|\theta_j\|_{ \mathcal L^\omega(\mathbb H)},\quad
\|\mathfrak s(\bfv_j)\|_{\mathcal L^\omega(\mathbb H))}
\lesssim\delta\|\nabla\bfv_j\|_{\mathcal L^\omega(\mathbb H)}.
\end{align*}
The rest of the proof can now be completed as before.
\end{proof}

\subsection{The problem in non-divergence form}
\label{sec:stokessteadyf}
In this section we consider the steady Stokes system
\begin{align}\label{eq:Stokesf}
\Delta \bfu-\nabla\pi=-\bff,\quad\Div\bfu=0,\quad\bfu|_{\partial{\Omega}}=0,
\end{align}
in a domain ${\Omega}\subset\R^n$. Here we gain two additional derivatives for the velocity field and one for the pressure  compared to the forcing. This requires roughly one derivative more for the boundary charts compared to Theorem  \ref{thm:stokessteady}. Note, however, that there is no gap between the function space $\mathcal L^ \omega$ in the estimate and that for the boundary charts. 
\begin{theorem}\label{thm:stokessteadyf}
Let $\omega$ be a parameter function satisfying \eqref{eq:omega condition} and $\bff\in\mathcal L^\omega(\Omega)$.
 Suppose that ${\Omega}$ is a bounded domain of class $W^2\mathcal L^\omega$.
Then there is a unique solution to \eqref{eq:Stokesf} and we have
\begin{align}\label{eq:mainf}
\|\nabla^2\bfu\|_{\mathcal L^\omega(\Omega)}+\|\nabla\pi\|_{\mathcal L^\omega(\Omega)}\lesssim\|\bff\|_{\mathcal L^\omega(\Omega)}+|(\bff)_\Omega|.
\end{align}
The constant in \eqref{eq:mainf} depends on the 
$\mathcal L^{\omega}$-norms of the local charts in the parametrisation of  $\partial\Omega$.
\end{theorem}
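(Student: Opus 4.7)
The plan is to mirror the localization-and-flattening strategy from the proof of Theorem \ref{thm:stokessteady}, now using Corollary \ref{cor:camH2} as the half-space input. Existence of a weak solution $(\bfu,\pi)\in W^{1,2}_{0,\Div}(\Omega)\times L^2_\perp(\Omega)$ follows from $\mathcal L^\omega(\Omega)\hookrightarrow L^2(\Omega)$, and the a priori smoothness required below will be removed at the end by the same regularization procedure as in the proof of Theorem \ref{thm:stokessteady}. I would take a partition of unity $\{\xi_j\}_{j=0}^\ell$ subordinate to a covering $\mathcal U^0\Subset\Omega$, $\mathcal U^1,\dots,\mathcal U^\ell$ with $\partial\Omega\subset\bigcup_{j=1}^\ell\mathcal U^j$ and boundary charts $\varphi_1,\dots,\varphi_\ell\in W^2\mathcal L^\omega$. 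Setting $\bfu_j:=\xi_j\bfu$, $\Pi_j:=\xi_j\pi$ in a boundary patch and flattening by $\bfPhi_j$ through $\bfv_j:=\bfu_j\circ\bfPsi_j$, $\theta_j:=\Pi_j\circ\bfPsi_j$, the chain rule produces a perturbed non-divergence form Stokes system on $\mathbb H$ of the shape
\begin{align*}
\Delta\bfv_j-\nabla\theta_j&=-\bff\circ\bfPsi_j+(\mathbb I-\mathbf{A}_j):\nabla^2\bfv_j+\mathbf{b}_j\cdot\nabla\bfv_j+(\mathbf{B}_j^\top-\mathbb I)\nabla\theta_j+\bfG_j,\\
\Div\bfv_j&=(\mathbb I-\mathbf{B}_j)^\top:\nabla\bfv_j+h_j,\qquad\bfv_j|_{\partial\mathbb H}=0,
\end{align*}
where $\mathbf{A}_j$ and $\mathbf{B}_j$ are built from $\nabla\bfPhi_j$, the coefficient $\mathbf{b}_j$ carries the $\nabla^2\bfPhi_j$-contribution arising when the Laplacian is pulled back, and $\bfG_j,h_j$ collect the cut-off commutator terms.

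Corollary \ref{cor:camH2} then bounds $\|\nabla^2\bfv_j\|_{\mathcal L^\omega(\mathbb H)}+\|\nabla\theta_j\|_{\mathcal L^\omega(\mathbb H)}$ by $\|\bff\circ\bfPsi_j\|_{\mathcal L^\omega(\mathbb H)}$, the $\mathcal L^\omega$-norm of the perturbation terms on the first line, and the $W^1\mathcal L^\omega$-norm of the divergence on the second line. The hypothesis $\partial\Omega\in W^2\mathcal L^\omega$ is exploited twice: it yields $\nabla\bfPhi_j\in L^\infty$ with $\|\mathbb I-\nabla\bfPhi_j\|_{L^\infty}\lesssim\delta$ achievable by introducing sufficiently many small charts, and it places $\nabla^2\bfPhi_j\in\mathcal L^\omega$ with norm controlled by $\|\varphi_j\|_{W^2\mathcal L^\omega}$. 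The principal perturbations $(\mathbb I-\mathbf{A}_j):\nabla^2\bfv_j$, $(\mathbf{B}_j^\top-\mathbb I)\nabla\theta_j$ and the divergence error $(\mathbb I-\mathbf{B}_j)^\top:\nabla\bfv_j$ are then handled through the trivial multiplier inequality \eqref{eq:cm+}: the $L^\infty\cdot\mathcal L^\omega$ summand is $\delta$-small and absorbed on the left after summing over $j$, while the $\mathcal L^\omega\cdot L^\infty$ summand calls for an $L^\infty$-bound on $\nabla^2\bfv_j$ respectively $\nabla\theta_j$. Such a bound will be secured by a bootstrap in the spirit of Theorem \ref{thm:stokessteady2}: pick a slightly sharper weight $\omega_1\leq\omega$ satisfying the Dini condition with $\partial\Omega$ still in $W^2\mathcal L^{\omega_1}$, prove the present theorem at the level $\omega_1$ first, and feed the resulting $\mathcal L^{\omega_1}\hookrightarrow L^\infty$ bound from \eqref{spanne1} into the $\omega$-level estimate. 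The lower-order term $\mathbf{b}_j\cdot\nabla\bfv_j$ is treated similarly, with $\|\mathbf{b}_j\|_{\mathcal L^\omega(\mathbb H)}\lesssim\|\nabla^2\varphi_j\|_{\mathcal L^\omega}$ and $\|\nabla\bfv_j\|_{L^\infty}$ supplied by Theorem \ref{thm:stokessteady2}.

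The cut-off commutators $\bfG_j,h_j$ are of order strictly lower than the leading terms and are absorbed as in the proof of Theorem \ref{thm:stokessteady} through the $L^q$-Stokes theory in Reifenberg-flat domains \cite{BH}, producing a bound $\lesssim\|\bff\|_{\mathcal L^\omega(\Omega)}+|(\bff)_\Omega|$; the explicit mean $|(\bff)_\Omega|$ appears since in non-divergence form $\bff$ cannot be replaced a priori by $\bff-(\bff)_\Omega$. The interior patch $j=0$ is covered by classical interior Campanato theory in non-divergence form (an analogue of Theorem \ref{thm:decayf} with full balls, cf.~\cite{GM,FuS}). Summing over $j$, transforming back with \eqref{lem:9.4.1} and absorbing the $\delta$-small terms yields \eqref{eq:mainf}. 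The main obstacle is precisely the Campanato-multiplier estimate for the principal perturbation $(\mathbb I-\mathbf{A}_j):\nabla^2\bfv_j$: because $\mathscr M^\omega=\mathcal L^\sigma$ is strictly smaller than $\mathcal L^\omega$ whenever $\omega$ fails the Dini condition, the sharp multiplier identity \eqref{eq:cm} is not directly available, so one must instead rely on the $L^\infty$-bootstrap for second derivatives outlined above, which is the technical heart of the argument.
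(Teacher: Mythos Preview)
Your bootstrap step has a genuine gap. You propose to ``pick a slightly sharper weight $\omega_1\leq\omega$ satisfying the Dini condition with $\partial\Omega$ still in $W^2\mathcal L^{\omega_1}$'', but this is not possible in general: if $\omega$ is non-Dini (the BMO case $\omega\equiv 1$ being the prime example covered by the theorem), the assumption $\partial\Omega\in W^2\mathcal L^\omega$ gives no reason for $\nabla^2\varphi_j$ to lie in any strictly smaller space $\mathcal L^{\omega_1}$. The construction you have in mind from Theorem~\ref{thm:stokessteady2} goes in the opposite direction --- it starts from a Dini $\omega$ and builds a still-Dini $\omega_1$ below it --- and does not help here. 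For the same reason your appeal to Theorem~\ref{thm:stokessteady2} to obtain $\|\nabla\bfv_j\|_{L^\infty}$ is unavailable without the Dini hypothesis. Consequently the $L^\infty$ bounds on $\nabla^2\bfv_j$ and $\nabla\theta_j$ that your use of \eqref{eq:cm+} requires are simply not at hand, and the principal perturbation $(\mathbb I-\bfA_j):\nabla^2\bfv_j$ cannot be absorbed along your route.

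The paper avoids this entirely by observing that the sharp multiplier identity \eqref{eq:cm} \emph{is} directly available: the extra derivative in the boundary assumption furnishes the embedding $W^1\mathcal L^\omega\hookrightarrow\mathcal L^\sigma(\delta)$, so $\nabla\varphi_j\in\mathcal L^\sigma=\mathscr M^\omega$ with small local norm, and the term $\|\mathbb I-\bfA_j\|_{\mathscr M^\omega}\|\nabla^2\bfv_j\|_{\mathcal L^\omega}$ is $\delta$-small without any bootstrap. The cross term $\|\nabla\bfA_j\|_{\mathcal L^\omega}\|\nabla\bfv_j\|_{\mathscr M^\omega}$ is then handled by $\|\nabla\bfv_j\|_{\mathcal L^\sigma}\lesssim\|\nabla^2\bfv_j\|_{L^{q_0}}$ (via $W^{1,q_0}\hookrightarrow\mathcal L^\sigma$) together with the non-divergence $L^{q_0}$-theory of \cite{Br}; note that \cite{BH}, which you cite, treats only the divergence-form problem and does not supply the needed $\|\nabla^2\bfu\|_{L^q}+\|\nabla\pi\|_{L^q}\lesssim\|\bff\|_{L^q}$ estimate under the present boundary regularity.
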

\begin{proof}
Arguing as in the proof of Theorem \ref{thm:stokessteady} we obtain a variant of the system \eqref{eq:Stokes3}, where the term $\Div(\bfF_j+\bfG_j)$ is replaced by $\bff_j+\bfg_j$ with $\bff_j:=(\xi_j\bff)\circ\bfPsi_j$ and $\bfg_j:=([\Delta,\xi_j]\bfu-[\nabla,\xi_j]\Pi+[\Div,\xi_j]\bfF)\circ\bfPsi_j$. Corollary \ref{cor:camH2} yields
\begin{align}\label{eq:0201c'}
\|\nabla^2\bfv_j\|_{\mathcal L^\omega(\mathbb H)}+\|\nabla\theta_j\|_{\mathcal L^\omega(\mathbb H)}\lesssim \|\Div\mathcal S (\bfv_j,\theta_j)+\bff_j+\bfg_j\|_{\mathcal L^\omega(\mathbb H)}+\|\mathfrak s(\bfv_j)+h_j\|_{W^1\mathcal L^\omega(\mathbb H)}.
\end{align}
Clearly, $\|\bff_j\|_{\mathcal L^\omega(\mathbb H)}\leq \|\bff\|_{\mathcal L^\omega(\mathbb H)}+|(\bff)_\Omega|$, while $\bfg_j$ an $h_j$ are lower order terms. Arguing as in the estimate for $\bfG_j$ in the proof of Theorem \ref{thm:stokessteady} we have
\begin{align*}
\|\bfg_j\|_{\mathcal L^\omega(\mathbb H)}&\lesssim \|\nabla\bfv_j\|_{\mathcal L^\omega(\mathbb H)}+\|\theta_j\|_{\mathcal L^\omega(\mathbb H)}\\
&\lesssim\|\nabla^2\bfu\|_{L^{q}(\Omega)}+\|\nabla\pi\|_{L^{q}(\Omega)}+ \|\bff\|_{L^{q}(\Omega)}.
 \end{align*}
Now we use the $L^q$-theory for the steady Stokes system from \cite[Theorem 3.1]{Br}.
 This is necessary since the embedding $W^2\mathcal L^\omega\hookrightarrow C^2$ fails in some cases (such that the results from \cite[Chapter IV]{Ga} do not apply), but we always have
\begin{align*}
W^2\mathcal L^\omega\hookrightarrow W^2\mathrm{BMO}\hookrightarrow \bigcap _{q<\infty}W^{2,q},
\end{align*}
which is sufficient in \cite{Br}. This yields
\begin{align*}
\|\bfg_j\|_{\mathcal L^\omega(\mathbb H)}\lesssim \|\bff\|_{L^q(\Omega)}.
\end{align*}
We can estimate $h_j$ similarly.
 Hence it remains to control the $W^1\mathcal  L^\omega$-norm of $\mathcal S (\bfv_j,\theta_j)$. It holds by \eqref{eq:cm}
\begin{align*}
\|\nabla\mathcal S_1(\bfv_j)\|_{\mathcal L^\omega(\mathbb H)}&\lesssim \|\mathbb I_{n\times n}-\bfA_j\|_{\mathscr M^\omega(\bfPhi_j(\mathcal U^j\cap \Omega))}\|\nabla^2\bfv_j\|_{\mathcal L^\omega(\mathbb H)}+\|\nabla\bfv_j\|_{\mathscr M^\omega(\mathbb H)}\|\nabla\bfA_j\|_{\mathcal L^\omega(\bfPhi_j(\mathcal U^j\cap \Omega))}\\
&\lesssim \|\mathbb I_{n\times n}-\bfA_j\|_{\mathcal L^\sigma(\bfPhi_j(\mathcal U^j\cap \Omega))}\|\nabla^2\bfv_j\|_{\mathcal L^\omega(\mathbb H)}+\|\nabla\bfv_j\|_{\mathcal L^\sigma(\mathbb H)}\|\nabla\bfA_j\|_{\mathcal L^\omega(\bfPhi_j(\mathcal U^j\cap \Omega))}\\
&\lesssim \|\nabla\varphi_j\|_{\mathcal L^\sigma(\partial\mathbb H)}\|\nabla^2\bfv_j\|_{\mathcal L^\omega(\mathbb H)}+\|\nabla\bfv_j\|_{\mathcal L^\sigma(\mathbb H)} \|\nabla^2\varphi_j\|_{\mathcal L^\omega(\partial\mathbb H)}.
\end{align*}
Now we take $q_0=\frac{n}{1-\vartheta_0}$ where $\vartheta_0$ is chosen such that $\sigma$ given by
\begin{align*}
\sigma(r):=\omega(r)\bigg(\int_\rho^1\frac{\omega(\rho)}{\rho}\,\dd\rho\bigg)^{-1}
\end{align*}
 satisfies \eqref{eq:omega condition}. We use the embeddings $$W^1\mathcal L^\omega\hookrightarrow \mathcal L^\sigma(\delta),\quad W^{1,q_0}\hookrightarrow \mathcal L^\sigma,$$ obtaining similarly to the above
\begin{align*}
\|\nabla\mathcal S_1(\bfv_j)\|_{\mathcal L^\omega(\mathbb H)}
&\lesssim \delta\|\nabla^2\bfv_j\|_{\mathcal L^\omega(\mathbb H)}+\|\nabla^2\bfv_j\|_{L^{q_0}(\mathbb H)}\\
&\lesssim \delta\|\nabla^2\bfv_j\|_{\mathcal L^\omega(\mathbb H)}+\|\nabla^2\bfu\|_{ L^{q_0}(\Omega)}\\
&\lesssim \delta\|\nabla^2\bfv_j\|_{\mathcal L^\omega(\mathbb H)}+\|\bff\|_{ L^{q_0}(\Omega)}\\
&\lesssim \delta\|\nabla^2\bfv_j\|_{\mathcal L^\omega(\mathbb H)}+\|\bff\|_{\mathcal L^{\omega}(\Omega)}+|(\bff)_\Omega|,
\end{align*}
where we used again the $L^q$-theory from \cite[Theorem 3.1]{Br}.
We obtain analogously
\begin{align*}
\|\nabla\mathcal S_2(\theta_j)\|_{\mathcal L^\omega(\mathbb H)}
&\lesssim \delta\|\nabla\theta_j\|_{\mathcal L^\omega(\mathbb H)}+\|\bff\|_{\mathcal L^{\omega}(\mathbb H)}+|(\bff)_\Omega|,\\
\|\nabla\mathfrak s_1(\bfv_j)\|_{\mathcal L^\omega(\mathbb H)}
&\lesssim \delta\|\nabla^2\bfv_j\|_{\mathcal L^\omega(\mathbb H)}+\|\bff\|_{\mathcal L^{\omega}(\mathbb H)}+|(\bff)_\Omega|.
\end{align*}
The proof can now be completed similarly to that of Theorem \ref{thm:stokessteady}.
\end{proof}

\smallskip
\par\noindent
{\bf Acknowledgement}. 
The author 
wishes to thank O. Dominguez Bonilla for useful suggestions regarding operators on Campanato spaces, in particular, for pointing out the reference \cite{Pe}.
Moreover, he thanks the anonymous referee for the careful reading of the manuscript and the valuable suggestions.

																													\section*{Compliance with Ethical Standards}\label{conflicts}
\smallskip
\par\noindent
{\bf Conflict of Interest}. The author declares that he has no conflict of interest.

\smallskip
\par\noindent
{\bf Data Availability}. Data sharing is not applicable to this article as no datasets were generated or analysed during the current study.


\begin{thebibliography}{[M]}
\bibitem{BKR} H. Beir\~{a}o Da Veiga, P. Kaplick\'y, M. R\r{u}\v{z}i\v{c}ka, \emph{Boundary Regularity of Shear Thickening Flows}, J. Math. Fluid Mech. 13, 387--404. (2011)
\bibitem{BG} M. Bolkart, Y. Giga, \emph{On $L^\infty$-${\mathrm BMO}$ estimates for derivatives of the Stokes
semigroup}, Math. Z. 284, 1163--1183. (2016)
\bibitem{BGS} M. Bolkart, Y. Giga, T. Suzuki, \emph{Analyticity of the Stokes semigroup in BMO-type spaces}, J. Math. Soc. Japan 70, 153--177. (2018)
\bibitem{BGST} M. Bolkart, Y. Giga, T. Suzuki; Y. Tsutsui, \emph{Equivalence of BMO-type norms with applications to the heat and Stokes semigroups}, Potential Anal. 49, 105--130. (2018)
\bibitem{Br} D. Breit: \emph{Regularity results in 2D fluid-structure interaction.} Math. Ann. DOI:10.1007/s00208-022-02548-9
\bibitem{Br2} D. Breit: \emph{Partial boundary regularity for the Navier--Stokes equations in irregular
  domains.} Preprint at arXiv:2208.00415v2
\bibitem{BCDKS} D. Breit, A. Cianchi, L. Diening, T. Kuusi \& S. Schwarzacher:\emph{ Pointwise Calder\'on-
Zygmund gradient estimates for the p-Laplace system.} J. Math. Pures Appl. 114,
146--190. (2018)
\bibitem{BCDS} D. Breit, A. Cianchi, L. Diening \& S. Schwarzacher: \emph{Global Schauder estimates for the $p$-Laplace system.} Arch. Rational Mech. Anal. 243, 201--255. (2022)
%
\bibitem{BS} R. M. Brown, Z. Shen; 
\emph{Estimates for the Stokes Operator in Lipschitz Domains.}
Indiana Univ. Math. J. 44(4), 1183--1206. (1995)
\bibitem{BH} S.-S. Byun, H. So, \emph{Weighted estimates for generalized steady Stokes systems in nonsmooth domains}, J. Math. Phys. 58, 023101. (2017)
\bibitem{CaA} Campanato, S.: \emph{Propriet\'a di h\"olderianit\'a di alcune classi di funzioni.} Ann. Scuola Norm.
Sup. Pisa (3) 17 (1963), 175--188.
\bibitem{CaB} Campanato, S.: \emph{Propriet\'a di una famiglia di spazi funzionali.} Ann. Scuola Norm. Sup. Pisa (3) 18 (1964), 137--160.
\bibitem{Ca} S. Campanato; Equazioni ellittiche del II deg ordine espazi $\mathfrak L^{(2,\lambda)}$. (Italian) Ann. Mat. Pura Appl. (4) 69, 321--381. (1965)
\bibitem{CM} M. Costabel, A. McIntosh, \emph{On Bogovskii and regularized Poincaré integral operators for de
Rham complexes on Lipschitz domains,} Math. Z. 265, 297--320. (2010)
\bibitem{CiMa} A. Cianchi, V. G. Maz'ya: \emph{Second-Order Two-Sided Estimates in
Nonlinear Elliptic Problems.} Arch Rational Mech. Anal. 229 569--599. (2018)
\bibitem{DKS} L. Diening, P. Kaplick\'y, S. Schwarzacher, \emph{BMO estimates for the p-Laplacian}, Nonlinear Anal. 75, 637--650. (2012)
\bibitem{DKS2} L. Diening, P. Kaplick\'y, S. Schwarzacher, \emph{Campanato estimates for the generalized Stokes system,}
Ann. Mat. Pura Appl. 4 193, no. 6, 1779--1794. (2014)
\bibitem{FuS} M. Fuchs, G. Seregin:
  Variational methods for problems from plasticity theory and for
  generalized Newtonian fluids. Lecture Notes in Mathematics Vol. 1749,
  Springer Verlag, Berlin-Heidelberg-New York. (2000)
\bibitem{Ga} G. P. Galdi: An Introduction to the Mathemaical Theory of the Navier--Stokes equations. Steady-Sate Problems. 2nd Edition. Springer Monographs in Mathematics. Springer, New York Dordrecht Heidelberg London. (2011) 
\bibitem{Gi} Giaquinta M., Introduction to Regularity Theory for Nonlinear Elliptic Systems,
Birkhäuser, Basel-Boston-Berlin, 1993.
\bibitem{GM} Giaquinta, M., Modica, G.: Nonlinear systems of the type of the stationary Navier--Stokes system. J. Reine Angew.
Math. 330, 173--214. (1982)
\bibitem{JN} F. John and L. Nirenberg: \emph{On functions of bounded mean oscillation.}
Comm. Pure. Appl. Math 14, 415--426. (1961)
\bibitem{MS} V. Macha, S. Schwarzacher: \emph{Global continuity and BMO estimates for non-Newtonian fluids with
perfect slip boundary conditions}, Rev. Mat. Iberoamericana, 1115--1173. (2021)
\bibitem{MaSh} V. G. Maz'ya, T. O. Shaposhnikova. Theory of Sobolev multipliers, volume 337 of
Grundlehren der Mathematischen Wissenschaften [Fundamental Principles of Mathematical Sciences].
Springer-Verlag, Berlin. With applications to differential and integral operators. (2009)
\bibitem{Na} E. Nakai, \emph{Pointwise multipliers on weighted BMO spaces}, Stud. Math. 125 (1), 35--56. (1997)
\bibitem{Ne}
Ne\v{c}as, J.:
Sur les normes \'{e}quivalentes dans \(W^{(k)}_p(\Omega)\) et sur la
coecivit\'{e} des formes formellement positives.
S\'{e}minaire des math\'{e}matiques sup\'{e}rieures.
In: Equations aus D\'{e}riv\'{e}es Partielles 19, 102--128.
Les Presses de l'Universit\'{e} de Montr\'{e}al, Montr\'{e}al. (1965)

\bibitem{Pe} J. Peetre, \emph{On convolution operators leaving $L^{p,\lambda}$ spaces invariant}, Ann. Mat. Pura Appl.
72, 295--304. (1966)

\bibitem{So} V. A. Solonnikov: \emph{Estimates for solutions of a non-stationary linearized system of Navier--Stokes equations,} (Russian) Trudy Mat. Inst. Steklov. 70, 213--317. (1964) 

\bibitem{Sp} S. Spanne, \emph{Some function spaces defined using the mean oscillation over cubes}, Ann. Scuola Norm. Sup.
Pisa (3) 19, 593--608. (1965)

%
%
\end{thebibliography}
\end{document}